\newcommand {\R}{\mathbb{R}}
\newcommand {\Z}{\mathbb{Z}}
\newcommand {\C}{\mathbb{C}}
\newcommand{\re}{\operatorname{Re}}
\newcommand{\im}{\operatorname{Im}}
\newcommand{\Mod}[1]{\ (\mathrm{mod}\ #1)}
\newtheorem{thm}{Theorem}[section]
\newtheorem{assu}[thm]{Assumption}
\newtheorem{lemma}[thm]{Lemma}
\newtheorem{prop}[thm]{Proposition}
\newtheoremstyle{named}{}{}{\itshape}{}{\bfseries}{.}{.5em}{\thmnote{#3}}
\theoremstyle{named}
\numberwithin{equation}{section}
\title{Trace formula and functional equation}
\author{Chung-Hang Kwan}
\address{University College London}
\email{ucahckw@ucl.ac.uk}
\author{Wing Hong Leung}
\address{Rutgers University}
\email{joseph.leung@rutgers.edu}
\date{}
\begin{document}

\begin{abstract}
We present a ``beyond-endoscopic'' treatment of the functional equation for the standard $L$-function of a holomorphic cusp form with level and nebentypus. We use Petersson's formula and methods from Venkatesh's thesis and ``spectral reciprocity''.

\end{abstract}

\subjclass[2020]{11F66, 11F72, 11L05, 11F12}
\keywords{Analytic number theory, automorphic form, $L$-function, functional equation, Voronoi formula, Kloosterman sum, Petersson trace formula}

\maketitle

\section{Introduction}

Since Selberg's 1989 Amalfi lectures, there has been a growing interest in exploring how principles from Analytic Number Theory can be applied to study $L$-functions which are axiomatized by their Dirichlet series, Euler products, analytic continuation and functional equations. Notably, Duke and Iwaniec established subconvex bounds for the Dirichlet coefficients of general classes of $L$-functions in a series of papers beginning with \cite{DI1}, where the \emph{functional equations} played a fundamental role and found important automorphic applications.  Their methods were substantially generalized by Luo--Rudnick--Sarnak \cite{LRS-selb} and Blomer--Brumley \cite{BB11}. For other interesting applications of analytic techniques to automorphic forms, see, e.g.,   \cite{SoMul, CoFar}.

\subsection{Beyond Endoscopy} This line of inquiry has been extended using \emph{trace formulae}. Initiated by Langlands \cite{Lan04}, the Beyond Endoscopy program aims to address his general functoriality conjecture by the poles of automorphic $L$-functions and a novel comparison of trace formulae. The first example was due to Venkatesh \cite{Ven04}. He showed that the pole of the symmetric square $L$-function for $\hbox{GL}(2)$ at $s=1$ detects the automorphic induction of Hecke Gr\"{o}ssencharakters to dihedral forms of $\hbox{GL}(2)$.

The ideas of Beyond Endoscopy have been adapted to various contexts, providing new proofs of several classical results. For example, Altu\u{g} \cite{Alt-2} gave a new proof of the subconvex bound $ a_{f}(n) \, = \,  O_{f,\, \epsilon}(n^{1/4+\epsilon})$ for the normalized Fourier coefficients of a cusp form $f$ of $\hbox{GL}(2)$, originally due to Selberg and Kuznetsov. Recent analytic applications of Beyond Endoscopy include  \cite{RanSelBE, Qi21, BFL23, BL24, emory2024BE}.


In this article, we study the \emph{analytic continuation} and \emph{functional equations} of $L$-functions through the method of Beyond Endoscopy. In the spirit of Venkatesh's thesis (cf. \cite[Thm. 1]{Ven-thesis} and \cite[Prop. 2]{Ven04}), it is both interesting and important to understand the sources of arithmetic invariants from the geometric sides of trace formulae. This pertains to the \emph{root numbers} in our context with the level structures of the cusp forms being essential considerations; see Sect. \ref{sketchoA}.


\subsection{Set-up}

Throughout this work, $D\ge 1$ and $k\ge 4$ are integers, and $\chi$ is a Dirichlet character $(\bmod\, D)$. Denote by $S_{k}(D, \chi)$ the space of holomorphic cusp forms of weight $k$, level $D$ and nebentypus $\chi$, equipped with the Petersson inner product:
\begin{align}\label{Petinn}
    ||f||^2 \ := \ \int_{\Gamma_{0}(D) \setminus \mathbb{H}} \ |f(z)|^2 y^{k} \ \frac{dx dy}{y^2} \hspace{20pt} (f \, \in \, S_{k}(D, \chi)),
\end{align}
where  $z\in \mathbb{H}$.  We impose $\chi(-1) = (-1)^k$.
 For $f\in S_{k}(D,\chi)$, the Fourier expansion 
 \begin{align}\label{conven}
	f(z) \ &= \  \sum_{n=1}^{\infty} \ a_{f}(n)n^{\frac{k-1}{2}} e(nz) \hspace{15pt} (z\in \mathbb{H})
\end{align}
holds, where $e(z):= e^{2\pi i z}$.   The standard $L$-function $\mathcal{L}(s,f)$ and its dual $\overline{\mathcal{L}}\left(s, f\right)$ for $f\in S_{k}(D, \chi)$ are defined by the Dirichlet series \footnote{ Except for the notation ``$\overline{\mathcal{L}}$'', all other bar notations in this article denote the complex conjugation.  }
\begin{align}\label{holoDS}
	\mathcal{L}(s,f)  :=  \sum_{n=1}^{\infty}  \frac{a_{f}(n)}{n^s} \hspace{10pt} \text{ and } \hspace{10pt} \overline{\mathcal{L}}\left(s, f\right) :=  \overline{\mathcal{L}(\overline{s},f)}  =  \sum_{n=1}^{\infty}  \frac{\overline{a_{f}}(n)}{n^s} \hspace{10pt} (\re s \gg 1).
\end{align}

We identified a minimal set of conditions on $a_{f}(n)$ such that the functional equation takes the form $\mathcal{L}(s,f) \,\leftrightarrow\, \overline{\mathcal{L}}(1-s,f)$ as specified by Selberg's axioms:
\begin{assu}\label{mainHecput}
There is an orthogonal basis $\mathcal{B}_{k}^*(D, \chi)$ of $S_{k}(D, \chi)$ such that for any $f\in \mathcal{B}_{k}^*(D, \chi)$,  the Fourier coefficients of $f$ satisfy:
    \begin{enumerate}
       \item \label{adjoint} $\overline{a_{f}(\ell)} =a_{f}(\ell) \overline{\chi}(\ell)$ whenever  $(\ell, D)=1$, and
       
        \item \label{multas} $a_{f}(nm)=a_{f}(n)a_{f}(m)$ whenever $n\ge 1$ and $m\mid D^{\infty}$. 
    \end{enumerate}
\end{assu}

In the classical theory of modular forms, dating back to Hecke and Petersson, there are plenty of examples where Assumption \ref{mainHecput} is satisfied; see Sect. \ref{assum}.  Also, Assumption \ref{mainHecput}.(\ref{adjoint})--(\ref{multas}) are weaker than requiring $\mathcal{L}(s,f)$ to admit the standard Euler product.

\subsection{Main results}
 Using the Petersson trace formula (Lem. \ref{Peterform}), we present a new proof of the analytic continuation and functional equation for $\mathcal{L}(s,f)$ by working directly with its definition in (\ref{holoDS}), i.e., without relying on any integral representations of $L$-functions.  This is distinct from Hecke's approach.

\subsubsection{Primitive nebentypus}

Our first result is an averaged version of the Voronoi formula. Its proof, \emph{independent} of Assumption \ref{mainHecput}, serves as the main component of our beyond-endoscopic implementation.

\begin{thm}\label{FourVorthm}
Let $D>1$, $k\ge 4$ and $\ell\ge 1$ are integers, and $\chi\, (\bmod\, D)$ is a primitive character satisfying $\chi(-1)=(-1)^k$. Write $\ell= \ell_{0}\ell'$, where $\ell_{0}:=(\ell, D^{\infty})$ and $(\ell', D)=1$. \footnote{ The notations $\ell_{0}$, $\ell'$ will be used across this text with the same meaning. } Then
\begin{align*}
\sideset{}{^h}{\sum}_{f \in \mathcal{B}_{k}(D, \chi)} \, &\overline{a_{f}(\ell)} \  \sum_{n=1}^{\infty} \,    a_{f}(n) g(n)\nonumber\\
     &\hspace{-10pt} =  \frac{ i^{k}\chi(-1) \epsilon_{\chi}}{D^{1/2}}   \ \  \sideset{}{^h}{\sum}_{f \in \mathcal{B}_{k}(D, \chi)} \, a_{f}\left(\ell'\right)\overline{\chi}(\ell') \sum_{n=1}^{\infty}  \, \overline{a_{f}(\ell_{0}Dn)} \,  \left( \, 2\pi\int_{0}^{\infty} \ g(x) J_{k-1}\left(4\pi\sqrt{\frac{nx}{D}}\right) dx\right) 
\end{align*}
   for any $g\in C_{c}^{\infty}(0, \infty)$ and orthogonal basis  $\mathcal{B}_{k}(D, \chi)$ of\, $S_{k}(D, \chi)$, where 
   \begin{itemize}
       \item $\epsilon_{\chi}$ is the Gauss sum associated with $\chi$, defined by \, $\epsilon_\chi  :=  D^{-1/2} \sum_{\alpha\Mod{D}}\chi(\alpha)e\left(\alpha/D\right);$
   
       \item $J_{k-1}(\, \cdot \,)$ is the $J$-Bessel function; 
 
       \item the harmonic average is given by $\sideset{}{^h}{\sum}_{f \in \mathcal{B}_{k}(D, \chi)} \ \alpha_{f} \ := \  \frac{\Gamma(k-1)}{(4\pi)^{k-1}} \,  \sum_{f \in \mathcal{B}_{k}(D, \chi)} \, \frac{\alpha_{f}}{||f||^2}. $
 
   \end{itemize}    
\end{thm}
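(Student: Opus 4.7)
The strategy is to apply Petersson's trace formula (Lemma \ref{Peterform}) to both sides, thereby eliminating the harmonic $f$-averages, and to verify the resulting identity between two purely geometric sums of Kloosterman sums weighted by $J$-Bessel values. Since $g \in C_c^{\infty}(0,\infty)$, Weil's bound on Kloosterman sums together with the standard bound $J_{k-1}(x) \ll \min(x^{k-1}, x^{-1/2})$ render all double sums and integrals absolutely convergent, so every interchange of summation and integration is justified.

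Applying Petersson to the LHS (after swapping the $f$- and $n$-sums) produces a diagonal term $g(\ell)$ together with an off-diagonal
\[
2\pi i^{-k} \sum_{D \mid c}\frac{1}{c}\sum_{n=1}^\infty g(n)\, S_\chi(\ell, n; c)\, J_{k-1}\!\left(\frac{4\pi\sqrt{\ell n}}{c}\right).
\]
An analogous application on the RHS, via the conjugated identity for $\sum_f^h a_f(\cdot)\overline{a_f(\cdot)}$, produces \emph{no} diagonal contribution: the equation $\ell' = \ell_0 D n$ would force $D \mid \ell'$, contradicting $(\ell', D) = 1$ when $D > 1$. The theorem is thus reduced to matching a specific sum of Kloosterman sums $S_{\bar\chi}(\ell', \ell_0 Dn; c)$ weighted by $\tilde g(n) := 2\pi\int_0^\infty g(x) J_{k-1}(4\pi\sqrt{nx/D})\,dx$ on one side, with the Kloosterman off-diagonal on the other side, plus the stray $g(\ell)$.

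For the matching, I would open $S_\chi(\ell, n; c)$ on the LHS, swap with the $n$-sum, and apply Poisson summation in $n$ to the inner sum $\sum_{n \ge 1} g(n)\, e(n\bar a/c)\, J_{k-1}(4\pi\sqrt{\ell n}/c)$; the Poisson-dual integer variable will play the role of the summation index on the RHS. To match the characters, write $c = Ds$, invoke primitivity of $\chi \pmod D$ via the evaluation $\sum_{a \bmod D}\chi(a) e(ma/D) = \bar\chi(m)\,\epsilon_\chi D^{1/2}$, and combine with the CRT factorization of $(\mathbb{Z}/c\mathbb{Z})^\times$ into its $D$- and $s$-parts. This extracts the Gauss-sum factor $\epsilon_\chi/D^{1/2}$ and the character value $\bar\chi(\ell')$ featured in the theorem. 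The Fourier integral arising from Poisson, of the shape $\int_0^\infty g(x) J_{k-1}(4\pi\sqrt{\ell x}/c) e(-(m - \bar a/c)x)\,dx$, should then rearrange, via a Hankel-type/Bessel-transform identity, into the kernel $\int_0^\infty g(x) J_{k-1}(4\pi\sqrt{m'x/D})\,dx$ of $\tilde g$ after suitable relabelling of indices.

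The hard part is the final Kloosterman-Bessel identification. Three items require particular care: (i) the twisted Kloosterman sum when $\gcd(c/D, D) > 1$, since $\chi$'s support on $(\mathbb{Z}/c\mathbb{Z})^\times$ is more restricted and the clean CRT factorization fails; (ii) the exact Hankel-Nicholson-style identity that converts the Poisson kernel $e(-(m-\bar a/c)x)J_{k-1}(4\pi\sqrt{\ell x}/c)$ into $J_{k-1}(4\pi\sqrt{m'x/D})$; and (iii) pinpointing which dual-frequency contribution supplies the leftover $g(\ell)$ from the LHS diagonal on the RHS. Together with the bookkeeping of the signs $i^k$ and $\chi(-1)$ coming from the involutions $\bar a \leftrightarrow a$ and $a \leftrightarrow -a$ inside the Kloosterman sum, these combinatorial-analytic steps are where the real work of the proof will lie.
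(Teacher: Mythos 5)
Your general strategy---apply Petersson, open the Kloosterman sum, Poisson in $n$, reorganize characters, and convert a Bessel--Fourier kernel into a Hankel kernel---is the right one and matches the paper's approach in broad outline, and your observation that the RHS diagonal vanishes because $\ell' = \ell_0 D n$ is impossible for $D>1$ is exactly the paper's Step~6 remark. However, the proposal has a genuine structural gap: a \emph{single} Poisson summation in $n$ is not enough to produce the Kloosterman sum $S_{\chi}(\ell_0 D n, \ell'; cD)$ on the RHS. After Poisson in $n \pmod{cD}$ you are left with a sum over the dual frequency $m$ and a sum over the \emph{original} modulus $c$; the paper's mechanism is (a) a Bessel reciprocity (Lemmas~\ref{hankelinv}--\ref{keyspec}, i.e.\ Weber's second exponential integral and Hankel inversion) that trades the archimedean modulus $cD$ for $m$, combined with additive reciprocity $\frac{\bar m}{cD}+\frac{\overline{cD}}{m}\equiv\frac{1}{cDm}$ on the arithmetic phase, and then (b) a \emph{second} Poisson summation in $c \pmod m$, which is what actually manufactures a new Kloosterman sum in the dual variable $n$. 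Your sketch mentions a ``Hankel-type/Bessel-transform identity'' only in passing and has no second Poisson at all, so as written the plan cannot get the frequency variable $m$ to become the modulus of the target Kloosterman sum.

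Two further items you flag as ``hard parts'' are indeed where the content lies, and in the paper they are resolved by observations you did not anticipate. First, the stray $g(\ell)$ from the LHS diagonal is absorbed not by a dual-frequency term on the RHS but by the $c=0$ term of the \emph{completed} sum after reciprocity: once the phase $e(\ell\overline{cD}/m)$ replaces $e(-\ell\bar m/cD)$, the $c=0$ contribution (only $m=1$ survives) is precisely $g(\ell)$ by Hankel inversion, so the Petersson diagonal ``completes the $c$-sum over $\Z$'' before the second Poisson. Second, your concern (i) about $\gcd(c/D, D)>1$ is settled by a vanishing lemma (the paper's Lem.~\ref{basictwisKL}): $S_{\chi}(\ell_0 D n,\ell'; cD)$ vanishes automatically when $(c,D)>1$, which is exactly what makes the unrestricted Petersson-in-reverse step legitimate. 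So your proposal is a sensible sketch that correctly identifies the pressure points, but it is missing the key double-Poisson-plus-reciprocity engine and leaves the three crucial identifications ((i)--(iii)) as declared open problems rather than resolving them.
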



From this, we derive the functional equation for $\mathcal{L}(s,f)$ under Assumption \ref{mainHecput} \footnote{which in fact always holds for primitive nebentypus. }.

\begin{thm}\label{axiomFE}
Suppose $D>1$, $k\ge 4$ are integers,  and $\chi\, (\bmod\, D)$ is  primitive with $\chi(-1)=(-1)^k$. Under Assumption \ref{mainHecput}, the $L$-functions $\mathcal{L}(s,f)$ and $\overline{\mathcal{L}}(s,f)$ admit entire continuation and satisfy the functional equation 
\begin{align}\label{primnebFE}
    \mathcal{L}(s, f) \ = \  i^{k} \chi(-1) \epsilon_{\chi} \overline{a_{f}(D)} D^{\frac{1}{2}-s} \,  \frac{\gamma_{k}(1-s)}{\gamma_{k}(s)} \, \overline{\mathcal{L}}\left(1-s, f\right)
\end{align}
for any  $f\in \mathcal{B}_{k}^*(D, \chi)$ and $s\in \C$, where
\begin{align}\label{gamks}
	\gamma_{k}(s) \, := \,  c_{k}\,  (2\pi)^{-s} \, \Gamma\left(s+ \frac{k-1}{2}\right) \, =  \, \pi^{-s}\,  \Gamma\left(\frac{s+\frac{k-1}{2}}{2}\right) \Gamma\left(\frac{s+\frac{k+1}{2}}{2}\right), \hspace{10pt} c_{k} \, := \, 2^{(3-k)/2} \sqrt{\pi}. 
\end{align}
    
\end{thm}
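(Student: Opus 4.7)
I would deduce the functional equation from the averaged Voronoi formula (Theorem \ref{FourVorthm}) together with Assumption \ref{mainHecput}, via Mellin inversion. The first move is to specialize Theorem \ref{FourVorthm} to indices $\ell$ coprime to $D$, so that $\ell_{0}=1$ and $\ell'=\ell$. Assumption \ref{mainHecput}(\ref{adjoint}) rewrites $a_{f}(\ell)\overline{\chi}(\ell)$ as $\overline{a_{f}(\ell)}$. Moreover, writing $n=n_{0}n'$ with $n_{0}=(n,D^\infty)$ and $(n',D)=1$, a double application of Assumption \ref{mainHecput}(\ref{multas}) gives $a_{f}(Dn)=a_{f}(Dn_{0})a_{f}(n')=a_{f}(D)\,a_{f}(n_{0})\,a_{f}(n')=a_{f}(D)\,a_{f}(n)$, whence $\overline{a_{f}(Dn)}=\overline{a_{f}(D)}\cdot\overline{a_{f}(n)}$. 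The averaged Voronoi identity then collapses to
\begin{equation*}
\sideset{}{^h}{\sum}_{f\in\mathcal{B}_{k}^{*}(D,\chi)} \overline{a_{f}(\ell)}\cdot E_{f}(g)\ =\ 0 \qquad\big((\ell,D)=1,\ g\in C_{c}^{\infty}(0,\infty)\big),
\end{equation*}
where
\begin{equation*}
E_{f}(g)\ :=\ \sum_{n=1}^\infty a_{f}(n)\,g(n)\ -\ \frac{i^{k}\chi(-1)\epsilon_{\chi}\,\overline{a_{f}(D)}}{D^{1/2}}\sum_{n=1}^\infty \overline{a_{f}(n)}\cdot 2\pi\!\int_{0}^{\infty}\!\! g(x)\,J_{k-1}\!\left(4\pi\sqrt{\tfrac{nx}{D}}\right)dx.
\end{equation*}

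Next I would de-average to conclude $E_{f}(g)=0$ for each individual $f\in\mathcal{B}_{k}^{*}(D,\chi)$. Since $\chi$ is primitive $\Mod{D}$, no form of strictly smaller level carries a character inducing $\chi$, so $S_{k}(D,\chi)$ contains no oldforms and $\mathcal{B}_{k}^{*}(D,\chi)$ may be taken to be a basis of (normalized) Hecke newforms. Strong multiplicity one then ensures that the vectors $\{(\overline{a_{f}(\ell)})_{f}\}_{(\ell,D)=1}$ span the finite-dimensional space indexed by $\mathcal{B}_{k}^{*}(D,\chi)$, forcing $E_{f}(g)=0$ for every $f$ and every $g$.

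The third step is Mellin extraction. Writing $g(x)=\frac{1}{2\pi i}\int_{(\sigma)}G(s)\,x^{-s}\,ds$ with $\sigma>1$, the first term of $E_{f}(g)$ becomes $\frac{1}{2\pi i}\int_{(\sigma)}G(s)\,\mathcal{L}(s,f)\,ds$. A Weber--Schafheitlin computation gives
\begin{equation*}
\int_{0}^{\infty} x^{-s}\,J_{k-1}\!\left(4\pi\sqrt{\tfrac{nx}{D}}\right)dx\ =\ (2\pi)^{2s-2}\,D^{1-s}\,n^{s-1}\,\frac{\Gamma\!\left(\tfrac{k+1}{2}-s\right)}{\Gamma\!\left(s+\tfrac{k-1}{2}\right)},
\end{equation*}
and the identity $(2\pi)^{2s-1}\Gamma\!\left(\tfrac{k+1}{2}-s\right)\big/\Gamma\!\left(s+\tfrac{k-1}{2}\right)=\gamma_{k}(1-s)/\gamma_{k}(s)$ converts the Bessel-side into $\frac{1}{2\pi i}\int_{(\sigma')}G(s)\,D^{1-s}\,\frac{\gamma_{k}(1-s)}{\gamma_{k}(s)}\,\overline{\mathcal{L}}(1-s,f)\,ds$ on a line $\sigma'<0$ where $\overline{\mathcal{L}}(1-s,f)$ is absolutely convergent. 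Equating the two contour integrals as $G$ ranges over Mellin transforms of $C_{c}^{\infty}(0,\infty)$ test functions, and invoking Mellin uniqueness, produces the functional equation (\ref{primnebFE}) initially on $\re s>1$. The identity then continues $\mathcal{L}(s,f)$ to the half-plane $\re s<0$ via $\overline{\mathcal{L}}(1-s,f)$, and entireness on $\C$ follows by gluing.

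\textbf{Main obstacle.} The delicate step is the de-averaging. It is clean here because primitive nebentypus suppresses oldforms and reduces the question to strong multiplicity one; outside this setting (e.g.\ imprimitive $\chi$) one would need a finer argument, presumably via the Petersson trace formula to produce quasi-orthogonality of the coefficient vectors $\{\overline{a_{f}(\ell)}\}_{(\ell,D)=1}$ within $\mathcal{B}_{k}^{*}(D,\chi)$. A secondary technicality is justifying the contour shift in the Mellin step, since the two Dirichlet series a priori live on disjoint half-planes of convergence; absolute convergence of $\sum a_f(n)g(n)$ and the rapid decay of the Bessel transform of $g\in C_{c}^\infty(0,\infty)$ are what allow the manipulation to go through.
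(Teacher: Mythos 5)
Your overall route---Theorem~\ref{FourVorthm} plus Assumption~\ref{mainHecput} plus Mellin inversion---is the same as the paper's, but two steps differ in substance, and one of them leaves a genuine gap.

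\emph{On the de-averaging.} You restrict to $(\ell,D)=1$, which forces you to justify linear independence of the vectors $\bigl(\overline{a_f(\ell)}\bigr)_{(\ell,D)=1}$ indexed by $f\in\mathcal{B}_k^*(D,\chi)$, and you do so by invoking newform theory (no oldforms at primitive nebentypus) together with strong multiplicity one. That is mathematically defensible, but it imports a deep automorphic theorem that the paper deliberately avoids. Theorem~\ref{FourVorthm} holds for all $\ell\ge 1$, and Sect.~\ref{isosect} observes that the full coefficient vectors $\underline{v_i}=\bigl(\overline{a_{f_i}}(1),\overline{a_{f_i}}(2),\dots\bigr)$ are linearly independent for the elementary reason that a cusp form whose Fourier expansion vanishes is the zero form; one then selects finitely many $\ell_1<\dots<\ell_d$ (not required to be coprime to $D$) making $A=\bigl(\overline{a_{f_i}}(\ell_j)\bigr)$ invertible. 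This is precisely what lets the paper claim its global inputs consist only of finite-dimensionality and Fourier expansion, and is also the content of its objection~(\ref{onlyunr}) to~\cite{HermanBeFe}: the restriction $(\ell,D)=1$ is exactly what has to be removed. It is for the general $\ell=\ell_0\ell'$ that Assumption~\ref{mainHecput}(\ref{multas}) is really needed, in order to reassemble $\overline{a_f}(\ell')\,\overline{a_f}(\ell_0)=\overline{a_f}(\ell)$ from the two separate spectral sums of the Voronoi identity.

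\emph{On the Mellin step.} This is the genuine gap. After de-averaging you have an identity $E_f(g)=0$ between two absolutely convergent expressions, one of which writes as a contour integral of $G(s)\,\mathcal{L}(s,f)$ on $\re s>1$ and the other as a contour integral involving $G(s)\,\overline{\mathcal{L}}(1-s,f)$ on $\re s<0$. To conclude the functional equation you must bring these to a common vertical line, and that requires knowing that $\mathcal{L}(s,f)$ (or the combination appearing in the integrand) continues holomorphically across the critical strip \emph{and} has polynomial growth in $|\im s|$ there---neither of which is available a priori, and both of which are part of what is being proved. Calling this ``a secondary technicality'' understates it: it is the main analytic obstacle, and is exactly the paper's criticisms~(\ref{anassu}) and~(\ref{Polygro}) of~\cite{HermanBeFe}. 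The paper resolves the circularity by keeping the spectral average until the end: Prop.~\ref{polygrAC} establishes the continuation of $\mathcal{A}_\ell(s)$ and the bound $\mathcal{A}_\ell(s)\ll(1+|t|)^{k-2}$ in $\re s>-(k-4)/2$ directly from the geometric side of the Petersson formula (Lem.~\ref{weakt-asp}), before any functional equation is used; only then does it shift both contours in~(\ref{mainglobid}) to a common line $\sigma\in(0,1)$ and afterwards isolate a single form via the matrix $A$. Your ordering---de-average first, Mellin second---cannot work unless you separately establish continuation and polynomial growth of the individual $\mathcal{L}(s,f)$, which would require the averaged estimate and the matrix inversion anyway. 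The contour-shift justification therefore has to come first, not last.
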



\subsubsection{Trivial nebentypus}

We obtain analogous results when  $\chi=\chi_{0} \, (\bmod\, D)$ is trivial, but $D$ is required to be square-free. Assumption \ref{mainHecput} is non-vacuous (see Sect. \ref{assum}).

\begin{thm}\label{trivthm}
Suppose $k\ge 4$ and $D\ge 1$ are integers with $k$ even and $D$ square-free.  
\begin{enumerate}
    \item \label{avgFourVortriv} For any $g\in C_{c}^{\infty}(0,\infty)$ and any orthogonal basis of $S_{k}(D)$, \footnote{  We write $S_{k}(D):=S_{k}(D, \chi_{0})$ and $\mathcal{B}_{k}^*(D)= \mathcal{B}_{k}^*(D,\chi_{0})$. 
} we have
\begin{align*}
\hspace{30pt} \sideset{}{^h}{\sum}_{f \in \mathcal{B}_{k}(D)} \, &\overline{a_{f}(\ell)} \  \sum_{n=1}^{\infty} \,    a_{f}(n) g(n) \nonumber\\
 &=  \mu(D)i^{k}   \ \  \sideset{}{^h}{\sum}_{f \in \mathcal{B}_{k}(D)}  a_{f}\left(\ell'\right)\sum_{n=1}^{\infty}  \, \overline{a_{f}(\ell_{0}Dn)} \,  \left( 2\pi\int_{0}^{\infty} g(x) J_{k-1}\left(4\pi\sqrt{\frac{nx}{D}}\right)  dx\right). 
\end{align*}

\item \label{triFEpt} If $D>1$ and Assumption \ref{mainHecput} also hold, then the $L$-functions $\mathcal{L}(s,f)$ and $\overline{\mathcal{L}}(s,f)$ admit entire continuation,  and satisfy  the functional equation 
\begin{align}\label{trivFE}
    \mathcal{L}(s, f) =   i^{k} \sqrt{D} \, \mu(D) \,   \overline{a_{f}(D)}\,   D^{\frac{1}{2}-s} \,  \frac{\gamma_{k}(1-s)}{\gamma_{k}(s)} \,\overline{\mathcal{L}}\left(1-s, f\right)
\end{align}
for any $f\in \mathcal{B}_{k}^*(D)$ and $s\in \C$, where $\mu(\cdot)$ is the M\"obius $\mu$-function. 

\item \label{ptD1ACFE} If $D=1$ and $k$ as above, then  for any  $f\in S_{k}(1)$ and $s\in \C$, the $L$-function $\mathcal{L}(s,f)$ admits an entire continuation, and satisfy the functional equation 
 \begin{align}\label{level1FE}
      \mathcal{L}(s, f) =   i^{k}  \,  \frac{\gamma_{k}(1-s)}{\gamma_{k}(s)} \, \mathcal{L}(1-s,f). 
 \end{align}

\end{enumerate}

\end{thm}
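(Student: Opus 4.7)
The plan is to parallel the proof scheme of Theorems \ref{FourVorthm} and \ref{axiomFE}, with the trivial-character modifications dictated by the replacement of a primitive Gauss sum by a Ramanujan sum. Square-freeness of $D$ is precisely what allows this Ramanujan-type contribution to collapse cleanly to $\mu(D)$.

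\textbf{Part (\ref{avgFourVortriv}).} I would open the LHS via the Petersson trace formula (Lemma \ref{Peterform}). Because $\chi=\chi_{0}$ and the modulus $c$ is constrained by $D\mid c$, the twisted Kloosterman sum coincides with the classical $S(\ell,n;c)$. Writing $c=Dc'$ and exploiting the square-freeness of $D$ together with the standard multiplicativity of Kloosterman sums at coprime factorizations (after absorbing $\gcd(c',D)$ into the $D$-factor), the inner residue sum modulo the $D$-factor contracts to a Ramanujan-type sum whose value at the argument $\ell'$, coprime to $D$ by construction, equals $\mu(D)$. A change of variables $n\mapsto\ell_{0}Dn$ in the surviving outer sum, followed by a second application of Petersson run in reverse, reassembles into the claimed RHS. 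The Bessel integral kernel $2\pi\int_{0}^{\infty}g(x)J_{k-1}(4\pi\sqrt{nx/D})\,dx$ arises naturally from this second unfolding, in exact analogy with Theorem \ref{FourVorthm}.

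\textbf{Part (\ref{triFEpt}).} I would test Part (\ref{avgFourVortriv}) against $g_{s}(x)=\varphi(x)x^{-s-(k-1)/2}$ for a fixed smooth compactly supported $\varphi$, and then analytically continue in $s$. Using Assumption \ref{mainHecput}, factor $a_{f}(\ell_{0}Dn)=a_{f}(\ell_{0})a_{f}(D)a_{f}(n)$ by (\ref{multas}), and $\overline{a_{f}(\ell)}=\overline{a_{f}(\ell_{0})}\,a_{f}(\ell')$ by (\ref{adjoint}) (which, for $\chi=\chi_{0}$, gives $\overline{a_{f}(\ell')}=a_{f}(\ell')$). After cancelling the common factor $\overline{a_{f}(\ell_{0})}a_{f}(\ell')$, the identity becomes a harmonic average over $f\in\mathcal{B}_{k}^{*}(D)$, weighted by $\overline{a_{f}(\ell_{0})}a_{f}(\ell')$, of the quantity $\mathcal{L}(s,f)-i^{k}\sqrt{D}\,\mu(D)\,\overline{a_{f}(D)}\,D^{1/2-s}\,(\gamma_{k}(1-s)/\gamma_{k}(s))\,\overline{\mathcal{L}}(1-s,f)$, vanishing for every admissible $\ell=\ell_{0}\ell'$. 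Varying $\ell$ and appealing to the linear independence of the coefficient sequences of distinct forms in $\mathcal{B}_{k}^{*}(D)$ (a multiplicity-one input, or equivalently a further application of Petersson orthogonality) extracts the bracketed quantity as $0$ for each individual $f$, yielding (\ref{trivFE}). Entire continuation comes for free from the compactly supported transform on the RHS, and the Mellin computation producing $\gamma_{k}(1-s)/\gamma_{k}(s)\cdot D^{1/2-s}$ is identical to the primitive case.

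\textbf{Part (\ref{ptD1ACFE}).} When $D=1$, Assumption \ref{mainHecput} is automatic on any Hecke eigenbasis of $S_{k}(1)$, which exists by the classical self-adjointness of Hecke operators at level one; simultaneously, the $\mathcal{L}$-versus-$\overline{\mathcal{L}}$ distinction collapses because $\overline{a_{f}(n)}=a_{f}(n)$ for each eigenform. Running the argument of Part (\ref{triFEpt}) with $D=1$ (so $\mu(D)=1$, $\ell_{0}=1$, and $\overline{a_{f}(D)}=\overline{a_{f}(1)}=1$ under the standard normalization) yields (\ref{level1FE}) for every Hecke eigenform in any such basis, and linearity over $S_{k}(1)$ then extends it to all $f$.

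\textbf{Main obstacle.} The delicate step is the Kloosterman-sum calculation inside Part (\ref{avgFourVortriv}) that manufactures the $\mu(D)$ factor. Unlike the primitive case, where the Gauss-sum bookkeeping of Theorem \ref{FourVorthm} streamlines the combinatorics, here the $D$-aspect of $S(\ell,n;Dc')$ must be handled by hand; square-freeness of $D$ must be used precisely enough to ensure that contributions from $\gcd(c',D)>1$ either cancel or are absorbed into $\mu(D)$, leaving no residual terms that would spoil the exact match with the second Petersson unfolding.
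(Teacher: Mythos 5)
Your proposal tracks the paper's overall scheme correctly: run the spectral-reciprocity chain of Theorem~\ref{FourVorthm} (Petersson $\to$ Poisson $\to$ additive reciprocity $\to$ Poisson $\to$ Kloosterman factorization $\to$ Petersson in reverse), noting that when $\chi=\chi_{0}$ the Gauss sum of the primitive case is replaced by a Ramanujan sum $\sum^{*}_{\alpha\,(D)}e(\alpha/D)=\mu(D)$, and that square-freeness makes this factor invertible and allows Lemma~\ref{basictwisKL} to drop the residual coprimality constraint; then run the Mellin-inversion and form-isolation argument of Section~\ref{deduceFE}. However, there are two genuine gaps.

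\textbf{The $D=1$ case is not a degenerate instance of the $D>1$ argument.} You dispatch Part~(\ref{ptD1ACFE}) by ``running the argument of Part~(\ref{triFEpt}) with $D=1$,'' but the very first reduction in the proof of Theorem~\ref{FourVorthm} breaks down there. In Step~1 (eq.~(\ref{evagammsum})), the dual zeroth frequency $m=0$ is discarded on the grounds that $x\equiv 0\,(cD)$ and $(x,cD)=1$ force $cD=1$, which is impossible for $D>1$. When $D=1$ this \emph{is} possible (take $c=1$), and one picks up an extra term $i^{-k}(\mathcal{H}_{k}g)(\ell)$ that survives to the very end. Likewise, when Petersson is reapplied in reverse, the diagonal term $\delta(\ell_{0}Dn=\ell')$ is identically zero for $D>1$ but nontrivial for $D=1$. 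The paper shows these two extraneous terms cancel precisely because $2\mid k$, and only then does Part~(\ref{avgFourVortriv}) hold at $D=1$. Your proposal does not see either of these terms, so the claimed deduction of~(\ref{level1FE}) is unsupported.

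\textbf{``Entire continuation comes for free'' is not justified.} The compactly supported test function in Part~(\ref{avgFourVortriv}) gives an identity between two infinite spectral sums; it does not by itself analytically continue $\mathcal{L}(s,f)$, let alone give the polynomial growth in $t$ needed to shift Mellin contours across the critical strip. The paper devotes all of Section~\ref{ACGproof} (Proposition~\ref{polygrAC} and Lemma~\ref{weakt-asp}) to establishing, \emph{from the geometric side alone}, that $\mathcal{A}_{\ell}(s)$ continues to $\re s>-(k-4)/2$ with growth $\ll(1+|t|)^{k-2}$. Without this input the contour shifts in Section~\ref{deduceFE} (from $\re s=3/2$ to $\re s=\sigma\in(0,1)$, and the shift of~(\ref{Hankmel}) to $\re s=-1/2$) cannot be justified, and the argument isolating the functional equation does not get off the ground. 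This concern applies equally to Parts~(\ref{triFEpt}) and~(\ref{ptD1ACFE}). A smaller but related imprecision: in Part~(\ref{avgFourVortriv}) the Ramanujan sum does not come from factoring $S(\ell,n;Dc')$ directly by writing $c=Dc'$; it appears only after the second Poisson summation produces the Kloosterman sum $S(\ell_{0}Dn,\ell';cD)$ with $(c,D)=1$, whose $D$-part trivializes \emph{because its first argument is divisible by $D$}. Attempting a direct factorization of the original Kloosterman sum would give a genuine Kloosterman sum modulo (a power of) $D$, not a Ramanujan sum.
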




\subsection{Sketch of argument}\label{sketchoA}

 We follow the overarching principle of  \emph{Spectral Reciprocity}; see \cite{BK-duke}.   Analytically, special care is necessary regarding the convergence of sums and integrals. Arithmetically, we study how transformations of the character sums lead to the occurrence of root numbers. In short, one must compare the twisted and untwisted Kloosterman sums with different conductors: 
\begin{align}\label{comparow}
    S_{\chi}(\ell, n; cD) \ \ \longleftrightarrow \ \ \overline{\chi}(c) S(n, \ell\overline{D};c) \ \  \longleftrightarrow \ \ S_{\chi}\left(\ell_{0}Dn, \ell'; cD\right),
\end{align}
where $(c,D)=1$, $D\overline{D} \equiv 1\, (\bmod\, c)$, and $\ell=\ell_{0}\ell'$ with $\ell_{0}:=(\ell, D^{\infty})$.

\subsubsection{Local ingredients} The first comparison of (\ref{comparow}) follows from the first four steps of Sect. \ref{mainbody}, along with an analytic-arithmetic cancellation involving the Hankel inversion and  Weber's identity (Lem. \ref{hankelinv}--\ref{keyspec}), which serve as the archimedean inputs. 

The second comparison of (\ref{comparow}) follows from the \emph{twisted multiplicativity} of Kloosterman sums, which serves as a key non-archimedean ingredient; see (\ref{twisnotw}) and (\ref{trivtwmu}). The arithmetic factor of this comparison is given by the  \emph{Gauss sum} when $\chi$ is primitive and the \emph{Ramanujan sum} when $\chi$ is trivial, accounting for the shapes of the root numbers in Thm. \ref{axiomFE}--\ref{trivthm}. Also, it is crucial to observe the \emph{vanishing} of the last  Kloosterman sum in (\ref{comparow}) when $(c,D)>1$ (Lem. \ref{basictwisKL}), and the \emph{non-vanishing} for the Ramanujan sum when $D$ is square-free, which play subtle but important roles in the final steps of Sect. \ref{mainbody} and \ref{sqfree}.

\subsubsection{Global ingredients} The automorphic (global) inputs to our theorems are fairly modest, ultimately relying only  
on the fact that $S_{k}(D,\chi)$ is a finite-dimensional inner product space and that each $f \in S_{k}(D,\chi)$ has a Fourier expansion.

\subsubsection{Trace formula} We make use of the Petersson trace formula rather than the Eichler--Selberg trace formula in Altu\u{g} \cite{Alt-3}. The main theorem of \cite{Alt-3} is that $L(s,\,\Delta)$  admits an analytic continuation to the region $\re s>31/32$ for Ramanujan's $\Delta$-function. While refining his intricate analysis could extend this region, substantial new inputs seem essential to reach $\re s> 1/2- \delta$, which is necessary for discussing the functional equations. Indeed,  the average of $a_{\Delta}(n)$ considered in \cite{Alt-3} uses the \emph{sharp cut-off} (see p. 1357 therein), and the fact  $S_{12}(1) =\C \cdot \Delta$ was needed to relate this average to his beyond-endoscopic average. These justify our use of the Petersson formula.


\subsection{Comments on the assumptions}\label{assum}

Assumption \ref{mainHecput}.(\ref{adjoint}) holds provided the orthogonal basis $\mathcal{B}_{k}^{*}(D, \chi)$ consists of simultaneous eigenforms for Hecke operators  $T_{\ell}$ with $(\ell, D)=1$.  Such a basis exists and we have $T_{\ell}^{*} = \overline{\chi}(\ell) T_{\ell}$ for $(\ell, D)=1$, where $T_{\ell}^{*}$ is the adjoint with respect to the Petersson inner product (\ref{Petinn}). These were known to Petersson.

If $\mathcal{B}_{k}^{*}(D, \chi)$ consists of simultaneous eigenforms for all Hecke operators  $T_{\ell}$ with $\ell \ge 1$, Hecke showed that Assumption \ref{mainHecput}.(\ref{multas}) holds. The existence of such a basis is less obvious but was also due to Hecke (see \cite[Chp. 14]{IK}) in special cases:
\begin{itemize}
    \item The nebentypus $\chi \, (\bmod\, D)$ is primitive with $D>1$ and $\chi(-1)=(-1)^k$;

    \item The nebentypus $\chi \, (\bmod\, D)$ is trivial with $D>1$ being prime, and the weight $k\in\{2,4,6,8,10,14\}$; 

    \item $D=1$ and $k\ge 12$ is even. 
\end{itemize}

The functional equation (\ref{trivFE}) remains valid for any even $k\ge 2$, square-free $D\ge 1$ and newform $f\in S_{k}(D)$ (\cite[Prop. 14.16]{IK}).  It should be possible to prove this using Beyond Endoscopy but with the refined Petersson formula; see, e.g., \cite[Thm. 3]{PY-MathAnn}.

\subsection{A related work}

We are aware of the earlier work by Herman \cite{HermanBeFe}, which, as pointed out in \cite[Sect. 6.3]{Sak-IHES}, has served as a prototype of several important research directions in Beyond Endoscopy. However, our careful examination revealed significant errors in \cite{HermanBeFe}. We believe a self-contained write-up that fully rectifies these issues is of interest. Furthermore, our argument is more streamlined, and certain observations presented in Sect. \ref{sketchoA} and the main body of this article do not appear to be present in the literature. The major errors of \cite{HermanBeFe}
are discussed as follows. 
 \begin{enumerate}
    \item \label{misrecip} The reciprocity relation alluded to  (\ref{comparow}) is crucial to \cite{HermanBeFe} and this article. However, the corresponding relation in \cite{HermanBeFe} is incorrect, partly due to misuse of the additive reciprocity (\cite[(3-5)]{HermanBeFe}). (Compare with our (\ref{compareHer}).)  

    \item \label{onlyunr} The additional restriction $(\ell,D)=1$ in \cite{HermanBeFe} would prevent him from deducing the functional equation of $\mathcal{L}(s,f)$. \footnote{ Also, the functional equation of $\mathcal{L}(s,f)$ was incorrectly stated a couple of times in \cite{HermanBeFe}!}

     \item \label{absone} Herman's proof of the functional equation (\ref{primnebFE}) crucially relied on the property $|a_{f_{D}}(D)|=1$ (\cite[pp. 508-510]{HermanBeFe}), \footnote{ The object `$f_{D}$' was not properly defined in \cite{HermanBeFe}, but its meaning may be inferred from \cite[line -7, p. 502]{HermanBeFe}.}  which is typically viewed as a consequence of determining the root number. Thus, assuming $|a_{f_{D}}(D)|=1$ in the beyond-endoscopic approach to (\ref{primnebFE}) would defeat its purpose. This property was neither proved in \cite{HermanBeFe} nor does it seem provable from the trace formula alone.

     \item \label{nonvan}  The assumption $a_{f}(D)\neq 0$ is needed for the well-definedness of \cite[(3-1)]{HermanBeFe}.

     \item \label{anassu} Based on the discussion in \cite[Appendix]{HermanBeFe}, an essential technical assumption on the growth of spectral parameters seems to be missing. 

     \item \label{Polygro} The convergence of the key identity \cite[(4-1)]{HermanBeFe} is unclear from his argument. 
 \end{enumerate}

We circumvent these concerns with a different argument, without relying on $|a_{f_{D}}(D)|=1$ or $a_{f}(D)\neq 0$, and our Thm. \ref{FourVorthm} holds for all $\ell \ge 1$. We also 
eliminate the requirement for the level $D$ to be square-free, as imposed in \cite{HermanBeFe}, for the case of primitive nebentypus. Our results for the case of trivial nebentypus are new.   Concerns (\ref{onlyunr}) and   (\ref{anassu})--(\ref{Polygro}) above reflect the algebraic and analytic subtleties, respectively, in deducing the functional equation of $\mathcal{L}(s,f)$ from the averaged Voronoi formula, which were left unaddressed in \cite{HermanBeFe}.  We remove the growth assumption of (\ref{anassu}) by carefully analyzing an oscillatory integral, and for (\ref{Polygro}), we ensure the polynomial growth of 
$\mathcal{L}(s,f)$ using only the trace formula. See Sect. \ref{ACGproof} for details. Regarding (\ref{onlyunr}), the discussion in \cite[Sect. 4]{HermanBeFe} was incomplete. In Sect. \ref{deduceFE}, we apply ideas from Venkatesh's thesis \cite[Sect. 2.6 and  3.2]{Ven-thesis} instead and carefully work through the steps, it becomes clear that Assumption \ref{mainHecput} is necessary---though it was unfortunately overlooked in \cite{HermanBeFe}.

The proof of Thm. \ref{FourVorthm} requires careful analysis. We found that $k\ge 4$ is necessary to ensure the validity of the argument,  whereas \cite{HermanBeFe} claimed $k\ge 2$ is sufficient without justification. Handling the cases $k\in \{2,3\}$ requires the use of  ``Hecke's trick''.

We also found a number of other gaps and issues in \cite{HermanBeFe}. To the best of our understanding, these are significant. Along with our major concerns listed above,  his final geometric expansion (\cite[(3-17)]{HermanBeFe}) is incorrect and does not match with our (\ref{finalgeo}). See our online note \cite{Erra} for full discussions.

\subsection{Acknowledgement}
The research is supported by the EPSRC grant: EP/W009838/1. We thank Matthew P. Young for his comments and suggestions.


\section{Preliminaries}\label{prelim}

\subsection{Integral transforms and Bessel functions}\label{BesstransSec}
Let $\phi \in C_{c}^{\infty}(\R)$ and\,  $\psi \in C_{c}^{\infty}(0,\infty)$. The \emph{Fourier transform} of $\phi$ and the \emph{Mellin transform} of $\psi$ are respectively given by 
\begin{align*}
     \widehat{\phi}(y) := \int_{\R} \ \phi(x) e(-xy)  dx \hspace{10pt} (y  \in  \R) \hspace{10pt} \text{ and } \hspace{10pt} 
     \widetilde{\psi}(s) :=   \int_{0}^{\infty}  \psi(x) x^{s-1}  dx \hspace{10pt} (s  \in   \C).
\end{align*}
Their respective inversion formula,  which holds provided the integral converges absolutely, is given by:
\begin{align}\label{melT}
    \phi(x) =  \int_{\R}   \widehat{\phi}(y) e(xy) dy =:  \check{\widehat{\phi}}(x)  \hspace{10pt} \text{ and } \hspace{10pt}     \psi(x)  = \int_{(\sigma)}  \widetilde{\psi}(s) x^{-s}  \frac{ds}{2\pi i}.
   \end{align}
   
 We record a useful estimation of oscillatory integrals from \cite[Lem. 8.1]{BKY}.
\begin{lemma}\label{BKYibp}
    Let $h\in C^{\infty}[\alpha, \beta]$ be a real-valued function and $w\in C_{c}^{\infty}[\alpha, \beta]$. Suppose $W, V, H,G, R>0$ are parameters such that the following bounds hold for any  $t\in [\alpha, \beta]$:
\begin{itemize}
    \item $w^{(j)}(t) \, \ll_{j} \,  W/V^{j}$ for any $j\ge 0$,

    \item $h^{(j)}(t) \ll_{j} H/G^{j}$ for any $j\ge 2$, and

    \item $|h'(t)| \, \ge \, R$. 
\end{itemize}
Then for any $A\ge  0$, we have
\begin{align}
    \int_{\R} \, w(t) e\left(h(t)\right) \ dt \ \ll_{A}  \  \left(\beta -\alpha\right) W \left(  \frac{1}{RV} \ + \  \frac{1}{RG}  \ + \  \frac{H}{(RG)^2}    \right)^A.  
\end{align}
 
\end{lemma}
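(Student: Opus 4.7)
The plan is to prove this by iterated integration by parts. Define the differential operator
\begin{align*}
    L\phi(t) \ := \ -\frac{1}{2\pi i}\frac{d}{dt}\!\left(\frac{\phi(t)}{h'(t)}\right),
\end{align*}
which is legitimate on $[\alpha,\beta]$ because $|h'|\ge R>0$. Using $\frac{d}{dt}e(h(t)) = 2\pi i h'(t) e(h(t))$ and the fact that $w\in C_c^\infty[\alpha,\beta]$ is supported strictly inside, the boundary terms in each integration by parts vanish, giving
\begin{align*}
    \int_{\R} w(t)\, e(h(t))\, dt \ = \ \int_{\R} L^A w(t)\, e(h(t))\, dt
\end{align*}
for every integer $A\ge 0$. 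Combined with the trivial length bound $(\beta-\alpha)$, the lemma reduces to a pointwise estimate
\begin{align*}
    \sup_{t\in[\alpha,\beta]} |L^A w(t)| \ \ll_A \ W\left(\frac{1}{RV} + \frac{1}{RG} + \frac{H}{(RG)^2}\right)^A.
\end{align*}

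For the pointwise bound I would expand $L^A w$ via the Leibniz rule. Each resulting term has the schematic form
\begin{align*}
    \frac{w^{(a_0)}(t)\ \prod_{i=1}^{j} h^{(b_i+1)}(t)}{\bigl(h'(t)\bigr)^{A+j}}
\end{align*}
with nonnegative integers satisfying $a_0 + \sum_{i=1}^j b_i = A$ and $b_i\ge 1$ (so only derivatives $h^{(\ge 2)}$ appear, which is exactly the regime where the hypothesis $h^{(j)}\ll H/G^j$ is assumed). Using $|w^{(a_0)}|\ll W/V^{a_0}$, $|h^{(b_i+1)}|\ll H/G^{b_i+1}$, and $|h'|\ge R$, each such term is bounded by
\begin{align*}
    W\cdot \frac{H^j}{V^{a_0} G^{A-a_0+j} R^{A+j}} \ = \ W\cdot \left(\frac{1}{RV}\right)^{a_0}\left(\frac{1}{RG}\right)^{A-a_0-j}\left(\frac{H}{(RG)^2}\right)^j,
\end{align*}
where the exponent $A-a_0-j\ge 0$ follows from $\sum b_i\ge j$. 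This is precisely a term in the trinomial expansion of $W\cdot\bigl(\tfrac{1}{RV}+\tfrac{1}{RG}+\tfrac{H}{(RG)^2}\bigr)^A$, so summing all such contributions (there are only $O_A(1)$ of them) yields the claim.

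The step I expect to be least mechanical is the algebraic rearrangement showing that the natural bound $W\cdot(1/RV)^{a_0}\cdot H^j/(RG)^{A-a_0+j}$ cleanly factorizes into the three elementary building blocks $1/(RV)$, $1/(RG)$, and $H/(RG)^2$ with exponents summing to $A$; everything else is essentially bookkeeping. An equivalent and slightly cleaner route would be a direct induction on $A$: verify that a single application of $L$ maps the pointwise class of functions dominated by $W\cdot\bigl(\tfrac{1}{RV}+\tfrac{1}{RG}+\tfrac{H}{(RG)^2}\bigr)^m$ (with suitably controlled derivatives) into the class for $m+1$, using that $Lw = -\tfrac{1}{2\pi i}\bigl(w'/h' - w\,h''/(h')^2\bigr)$. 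Either route avoids invoking the method of stationary phase and keeps the proof entirely elementary.
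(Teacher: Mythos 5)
The paper does not prove this lemma; it is quoted verbatim from \cite[Lem.\ 8.1]{BKY}, where it is established by exactly the repeated-integration-by-parts argument you give. Your proof is correct: the operator $L$ transfers cleanly to $w$ since $w$ vanishes to infinite order at $\alpha,\beta$, the schematic form $w^{(a_0)}\prod_{i=1}^j h^{(b_i+1)}/(h')^{A+j}$ with $a_0+\sum b_i=A$ and $b_i\ge1$ is preserved under one application of $L$ (each of the $O(1)$ children either increments $a_0$, increments some $b_i$, or appends a new factor $h''$ with $b=1$ while raising the denominator power by $2$; in all cases the weighted count $a_0+\sum b_i$ rises by one and the denominator exponent stays equal to the new $A$ plus the new $j$), and the factorization $W\,(1/RV)^{a_0}(1/RG)^{A-a_0-j}(H/(RG)^2)^j$ has nonnegative exponents summing to $A$ precisely because $\sum b_i\ge j$. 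The one loose end you leave implicit is that the lemma is stated for all real $A\ge0$, not just integers; this follows at once from the integer case, since if the bracketed quantity $\Lambda:=\tfrac1{RV}+\tfrac1{RG}+\tfrac{H}{(RG)^2}$ satisfies $\Lambda\ge1$ the trivial $A=0$ bound already gives the claim, while if $\Lambda<1$ one applies the integer result with $\lceil A\rceil$ and uses $\Lambda^{\lceil A\rceil}\le\Lambda^A$.
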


  Recall the following integral representation for  $J$-Bessel functions (\cite[p. 192]{Wa95}): 
\begin{lemma}\label{MelBarJ}
    Let $k\ge 2$ and $\gamma_{k}(s)$ be defined as in (\ref{gamks}). Then we have \begin{align}\label{bessgamm}
 	J_{k-1}(4\pi x)  =     \frac{1}{2\pi} \, \int_{(\sigma)}  \frac{\gamma_{k}(1-s)}{\gamma_{k}(s)}  x^{2(s-1)} \, \frac{ds}{2\pi i}  \hspace{15pt} (x>0, \hspace{5pt}  1< \sigma<(k+1)/2). 
 \end{align}

\end{lemma}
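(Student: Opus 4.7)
The plan is to derive (\ref{bessgamm}) from the classical Mellin transform of $J_{k-1}$ together with the Legendre duplication formula, so the whole argument reduces to bookkeeping. I would start from the standard formula
\begin{align*}
    \int_{0}^{\infty} J_{\nu}(y)\,  y^{w-1}\, dy \ = \ \frac{2^{w-1}\, \Gamma\!\left(\tfrac{\nu+w}{2}\right)}{\Gamma\!\left(\tfrac{\nu-w}{2}+1\right)},
\end{align*}
valid for $-\nu<\re w<1/2$ by absolute convergence and extended to $-\nu<\re w<3/2$ by conditional convergence near infinity (see, e.g., Watson \cite{Wa95}). Mellin inversion then presents $J_{k-1}(y)$ as a contour integral in $w$.

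Next I would substitute $y = 4\pi x$ and perform the linear change of variable $w = 2(1-s)$ to bring the power of $x$ into the form $x^{2(s-1)}$. The factor $(-2)\, ds$ from $dw$ flips the orientation of the contour and supplies the correct sign. After collecting the constants, the prefactor $(4\pi)^{-w}\, 2^{w-1}\cdot 2$ collapses, by direct algebra, to $(2\pi)^{2s-1}/(2\pi)$, while the Gamma quotient becomes $\Gamma\!\left(\tfrac{k+1}{2}-s\right)/\Gamma\!\left(\tfrac{k-1}{2}+s\right)$. The admissible strip $-(k-1) < \re w < 3/2$ translates to $1/4<\sigma<(k+1)/2$, which contains the stated range $1<\sigma<(k+1)/2$.

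It then remains to identify the resulting expression
\begin{align*}
  \frac{(2\pi)^{2s-1}}{2\pi}\cdot \frac{\Gamma\!\left(\frac{k+1}{2}-s\right)}{\Gamma\!\left(\frac{k-1}{2}+s\right)}
\end{align*}
with $\frac{1}{2\pi}\cdot \frac{\gamma_{k}(1-s)}{\gamma_{k}(s)}$. This is immediate from the first definition of $\gamma_{k}$ in (\ref{gamks}): the $(2\pi)^{-s}$ factors combine to $(2\pi)^{2s-1}$, the constant $c_{k}$ cancels in the ratio, and the Gamma arguments $s\pm\tfrac{k-1}{2}$ shift correctly under $s\mapsto 1-s$. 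The equivalence of the two displayed expressions for $\gamma_{k}(s)$ in (\ref{gamks}) is then just the Legendre duplication formula $\Gamma(2z)=\pi^{-1/2}\, 2^{2z-1}\Gamma(z)\Gamma(z+\tfrac12)$ applied with $2z = s+(k-1)/2$.

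The only point requiring mild care is the use of Mellin inversion at a function that is only conditionally integrable at infinity. The cleanest way to address this is to insert a smooth cut-off $\chi_{R}(y)$ with $\chi_{R}\equiv 1$ on $[0,R]$, apply Mellin inversion unambiguously to $J_{k-1}\chi_{R}$, and pass to $R\to\infty$ using the asymptotic $J_{k-1}(y) \sim \sqrt{2/(\pi y)}\cos(y-\tfrac{k-1}{2}\pi-\tfrac{\pi}{4})$ to justify the interchange; alternatively, one can quote the Mellin–Barnes formula directly from Watson \cite{Wa95}. I do not anticipate any genuine obstacle, as the lemma is classical and the whole argument is a careful change of variables.
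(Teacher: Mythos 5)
Your proposal is correct. The paper itself offers no argument and simply cites \cite[p.\,192]{Wa95} for the Mellin--Barnes representation, so the comparison here is between ``cite Watson'' and ``derive it from Watson's Mellin transform formula''. Your derivation is the standard one: start from $\int_0^\infty J_\nu(y)\,y^{w-1}\,dy = 2^{w-1}\Gamma(\tfrac{\nu+w}{2})/\Gamma(\tfrac{\nu-w}{2}+1)$, invert, substitute $y=4\pi x$ and $w=2(1-s)$, and match constants against the definition of $\gamma_k$ in (\ref{gamks}). I checked the algebra: $(4\pi)^{-w}\,2^{w-1}\cdot 2$ does collapse to $(2\pi)^{2s-1}/(2\pi)$, the Gamma quotient becomes $\Gamma(\tfrac{k+1}{2}-s)/\Gamma(\tfrac{k-1}{2}+s)$, and this equals $(2\pi)^{1-2s}\gamma_k(1-s)/\gamma_k(s)$ as needed, and the strip $-(k-1)<\re w<3/2$ does map to $1/4<\sigma<(k+1)/2$ under $\sigma=1-w/2$.

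One small remark: for the range actually stated in the lemma, $1<\sigma<(k+1)/2$, the corresponding $w$-line has $\re w = 2(1-\sigma) \in (-(k-1),\,0)$, which sits strictly inside the strip of \emph{absolute} convergence $-(k-1)<\re w<1/2$. So the conditional-convergence issue you flag (and the smooth cut-off $\chi_R$ you introduce to handle it) never actually arises; absolute convergence of both the Mellin transform and its inverse holds on the nose. Indeed the restriction $\sigma>1$ in the lemma is precisely what makes the integrand on the line $\re s=\sigma$ absolutely integrable (Stirling gives $|\gamma_k(1-s)/\gamma_k(s)| \asymp |\im s|^{1-2\sigma}$), so the extension to conditional convergence on $0 \le \re w < 3/2$ is not used. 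Your argument is correct; it simply carries a bit of extra caution that is not needed in the stated range.
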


 Let $F\in C_{c}^{\infty}(0, \infty)$ and $k\ge 1$ be an integer. The  \textit{Hankel transform} of $F$ is defined by
\begin{align}\label{Hatrasdef}
	(\mathcal{H}_{k}F)(a) &:=   2\pi \int_{0}^{\infty}  F(x) J_{k-1}(4\pi\sqrt{ax})  dx \hspace{15pt} (a  >   0).
\end{align}
The rapid decay of $(\mathcal{H}_{k}F)(a)$ as $a\to \infty$ can be seen by repeated integration by parts in (\ref{Hatrasdef}) using the equality (see \cite[p. 206]{Wa95}):
     \begin{align}\label{asympJbess}
        J_{k-1}\left( 2\pi x\right)  =  W_{k}(x) e(x)   +  \overline{W_{k}}(x) e(-x)
    \end{align}
    for $x>0$,  where $W_{k}$ is a smooth function satisfying
    \begin{align}\label{bessder}
         x^{j}(\partial^{j}W_{k})(x)  \ll_{j,k}   x^{-1/2}
    \end{align}
    for any $j\ge 0$ and $x>1$. As $a\to 0$, we have the bound 
     \begin{align}\label{basicHank}
        (\mathcal{H}_{k}F)(a)  \ll_{k}  a^{\frac{k-1}{2}}, 
    \end{align}
    which follows easily from the estimate
\begin{align}\label{Jbessunibd}
        J_{k-1}(y)  \ll_{k}   y^{k-1} \hspace{10pt} \text{ for } \hspace{10pt} y  >  0. 
    \end{align}

\begin{lemma}
For $k> 2$ and $a \in (0,1)$, we have
     \begin{align}\label{derHank}
        (\mathcal{H}_{k}F)'(a) \ \ll_{k} \ a^{\frac{k-3}{2}} \hspace{20pt} \text{ and } \hspace{20pt} (\mathcal{H}_{k}F)''(a) \ \ll_{k} \ a^{\frac{k-5}{2}}.
    \end{align}
\end{lemma}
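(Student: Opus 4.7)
The plan is to exploit a self-similarity of the Hankel transform under differentiation in $a$. Writing $u = 4\pi\sqrt{ax}$, a direct computation gives $\partial u/\partial a = (x/a)\,\partial u/\partial x$, and therefore
\[
\frac{\partial}{\partial a} J_{k-1}(4\pi\sqrt{ax}) \ = \ \frac{x}{a}\,\frac{\partial}{\partial x} J_{k-1}(4\pi\sqrt{ax}).
\]
This is the key identity, converting an $a$-derivative outside the reach of (\ref{basicHank}) into an $x$-derivative that I can remove by integration by parts.

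Since $F \in C_{c}^{\infty}(0,\infty)$, differentiation under the integral sign in (\ref{Hatrasdef}) is routine, after which I integrate by parts in the $x$-variable. The boundary terms vanish because $xF(x)$ has compact support inside $(0,\infty)$, and so
\[
(\mathcal{H}_{k}F)'(a) \ = \ -\frac{1}{a}\,(\mathcal{H}_{k}G)(a), \qquad G(x) \ := \ \frac{d}{dx}\bigl(xF(x)\bigr) \ \in \ C_{c}^{\infty}(0,\infty).
\]
Applying the bound (\ref{basicHank}) to $G$ in place of $F$ gives $(\mathcal{H}_{k}G)(a) \ll_{k} a^{(k-1)/2}$, hence the first-derivative estimate $(\mathcal{H}_{k}F)'(a) \ll_{k} a^{(k-3)/2}$.

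The second derivative is then handled by iterating the same maneuver on $G$: one obtains $(\mathcal{H}_{k}G)'(a) = -(1/a)(\mathcal{H}_{k}H)(a)$ with $H(x) := (xG(x))' \in C_{c}^{\infty}(0,\infty)$. Differentiating the identity for $(\mathcal{H}_{k}F)'(a)$ now yields
\[
(\mathcal{H}_{k}F)''(a) \ = \ \frac{1}{a^{2}}(\mathcal{H}_{k}G)(a) \ + \ \frac{1}{a^{2}}(\mathcal{H}_{k}H)(a) \ \ll_{k} \ a^{(k-5)/2},
\]
once more by (\ref{basicHank}). There is no substantive obstacle: the only points deserving care are the legitimacy of differentiation under the integral and the vanishing of boundary terms in the integration by parts, both of which follow immediately from the fact that $F$ is compactly supported inside $(0,\infty)$. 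The hypothesis $k>2$ simply keeps the derived exponents in a range where (\ref{basicHank}) still produces a useful estimate after the factor $a^{-2}$ is absorbed; smaller $k$ would require additional regularisation in the spirit of Hecke's trick flagged at the end of Sect.~\ref{sketchoA}.
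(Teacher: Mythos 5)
Your argument is correct, but it takes a genuinely different route from the paper. The paper differentiates under the integral sign in $a$ and then controls $J_{k-1}'\bigl(4\pi\sqrt{ax}\bigr)$ via the Bessel recurrence (\ref{recur}), $2J_{k-1}'(z)=J_{k-2}(z)-J_{k}(z)$, after which the uniform small-argument bound (\ref{Jbessunibd}) applied to $J_{k-2}$ and $J_{k}$ yields $(\mathcal{H}_{k}F)'(a)\ll_{k}a^{-1/2}\int_{x\asymp 1}\{(\sqrt{ax})^{k-2}+(\sqrt{ax})^{k}\}\,dx\ll_{k}a^{(k-3)/2}$, and similarly for the second derivative. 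You instead exploit the self-similarity of the kernel: since $\partial_{a}J_{k-1}(4\pi\sqrt{ax})=\tfrac{x}{a}\,\partial_{x}J_{k-1}(4\pi\sqrt{ax})$, an integration by parts in $x$ (with vanishing boundary terms because $xF(x)\in C_{c}^{\infty}(0,\infty)$) converts the $a$-derivative into a Hankel transform of a new test function, giving the closed reduction $(\mathcal{H}_{k}F)'(a)=-a^{-1}(\mathcal{H}_{k}G)(a)$ with $G=(xF)'$, which iterates to the second derivative via $H=(xG)'$. You then invoke (\ref{basicHank}) for $G$ and $H$. Both proofs ultimately rest on (\ref{Jbessunibd}); yours packages the estimate through (\ref{basicHank}) and a transfer identity, while the paper uses the recurrence (\ref{recur}) directly. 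Your reduction is somewhat more structural: it avoids the recurrence entirely, recursively expresses $a^{j}(\mathcal{H}_{k}F)^{(j)}(a)$ as linear combinations of Hankel transforms of explicit smooth test functions, and would extend mechanically to higher derivatives. One small quibble: the hypothesis $k>2$ is not needed for the bounds themselves (your argument, like the paper's, is valid for any $k\ge 1$); it is imposed because the paper later integrates $a^{(k-5)/2}a^{(k-1)/2}$ over $(0,1)$ in (\ref{IBPtwi}), which requires $k>2$ for convergence.
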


\begin{proof}
Recall the recurrence relation
\begin{align}\label{recur}
        2 J_{k}'(z) \ =  \ J_{k-1}(z) - J_{k+1}(z).
    \end{align}
It then follows from (\ref{Jbessunibd}) that
\begin{align}
	(\mathcal{H}_{k}F)'(a) \ll_{k} a^{-1/2} \int_{x\asymp 1}  \left\{ (\sqrt{ax})^{k-2} +   (\sqrt{ax})^{k}\right\}  dx  \ll_{k} a^{\frac{k-3}{2}}. 
\end{align}
  The second bound of (\ref{derHank}) can be proved similarly. 
\end{proof}
    
The following two results can be found in \cite[Chp. 14.3-4, 13.31]{Wa95}.
    
\begin{lemma}[Hankel inversion formula]\label{hankelinv}
For any  $F\in C_{c}^{\infty}(0, \infty)$, we have
\begin{align}
	(\mathcal{H}_{k} \circ \mathcal{H}_{k}F)(b) \ = \  F(b) \hspace{20pt} (b \ > \  0). 
\end{align}
\end{lemma}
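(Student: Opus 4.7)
Since the statement is classical (Watson), the plan is to reduce Hankel inversion to plain Mellin inversion via the Mellin--Barnes representation (\ref{bessgamm}) recorded in Lemma \ref{MelBarJ}. The guiding observation is that the kernel factor $\gamma_k(1-s)/\gamma_k(s)$ is self-inverse under $s\mapsto 1-s$, so composing two Hankel transforms will produce $\tfrac{\gamma_k(1-s)}{\gamma_k(s)}\cdot \tfrac{\gamma_k(s)}{\gamma_k(1-s)}=1$, collapsing the double transform into a bare Mellin inversion of $F$.

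First, I would insert (\ref{bessgamm}) with $y=\sqrt{ax}$ into the definition of $\mathcal{H}_k F$ and swap the $s$- and $x$-integrals, to obtain
$$(\mathcal{H}_{k}F)(a) \;=\; \int_{(\sigma)} \frac{\gamma_{k}(1-s)}{\gamma_{k}(s)}\,\widetilde{F}(s)\, a^{s-1}\,\frac{ds}{2\pi i}, \qquad 1<\sigma<\tfrac{k+1}{2}.$$
Changing $s\mapsto 1-s$ reads off the Mellin transform of $G:=\mathcal{H}_k F$ as $\widetilde{G}(s)=\tfrac{\gamma_k(s)}{\gamma_k(1-s)}\,\widetilde{F}(1-s)$. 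The $s$-integral is only conditionally convergent (the gamma ratio grows polynomially in $|\im s|$ by Stirling), but the rapid decay of $\widetilde{F}(s)$ in vertical strips (since $F\in C_c^\infty(0,\infty)$) permits a standard truncation argument: swap on the truncation to $|\im s|\le T$ where absolute convergence holds, and then let $T\to\infty$.

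Re-inserting the same Mellin--Barnes identity into the outer integral then yields
$$(\mathcal{H}_k G)(b) \;=\; \int_{(\sigma)} \frac{\gamma_k(1-s)}{\gamma_k(s)}\,\widetilde{G}(s)\, b^{s-1}\,\frac{ds}{2\pi i} \;=\; \int_{(\sigma)} \widetilde{F}(1-s)\, b^{s-1}\,\frac{ds}{2\pi i},$$
the gamma ratios cancelling exactly. The substitution $u=1-s$ then recognises the result as the inverse Mellin transform of $\widetilde{F}$ evaluated at $b$, which equals $F(b)$ by classical Mellin inversion.

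The main obstacle is justifying the second swap, since $G=\mathcal{H}_k F$ is \emph{not} compactly supported and $\widetilde{G}$ must be controlled in vertical strips by hand. For this I would use the bounds (\ref{basicHank})--(\ref{derHank}) to control $G(a)$ near $a=0$, and repeated integration by parts via the asymptotic (\ref{asympJbess}) (or directly Lemma \ref{BKYibp}) to establish rapid decay of $G(a)$ as $a\to\infty$. These two inputs provide holomorphy of $\widetilde{G}(s)$ together with enough decay in a vertical strip containing $\re s=\sigma$ for the same truncation/Fubini argument used in the first step to be reused, which completes the plan.
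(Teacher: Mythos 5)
Your argument is correct, but it takes a genuinely different route from the paper: the paper does not prove Lemma \ref{hankelinv} at all, but simply cites Watson's treatise (\cite[Chp.~14.3--4]{Wa95}), where the Fourier--Bessel (Hankel) inversion theorem is established by classical real-variable estimation of the partial integrals. Your proposal, by contrast, reduces Hankel inversion to Mellin inversion via the Mellin--Barnes representation of $J_{k-1}$ from Lemma \ref{MelBarJ}, exploiting the observation that $\tfrac{\gamma_k(1-s)}{\gamma_k(s)}\cdot\tfrac{\gamma_k(s)}{\gamma_k(1-s)}=1$. This is a valid and self-contained derivation, and it is in the spirit of the rest of the paper, where the same gamma ratio encodes the functional-equation structure. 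Two remarks on the justifications you sketch. First, the initial swap can be done by Fubini directly rather than by truncation: taking $\sigma\in(3/2,(k+1)/2)$ makes the gamma ratio absolutely integrable on the vertical line by (\ref{ratiostir}), and $F$ has compact support, so no conditional-convergence argument is needed. Second, you correctly flag that the second swap is the delicate step, since $G=\mathcal{H}_kF$ is not compactly supported; the bound $G(a)\ll a^{(k-1)/2}$ from (\ref{basicHank}) near the origin and the rapid decay from (\ref{asympJbess})--(\ref{bessder}) at infinity give absolute convergence of $\int_0^\infty|G(a)|a^{\sigma-1}\,da$ for $\sigma>(3-k)/2$, and hence of the double integral, which is what is needed. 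One further point worth recording explicitly: the relation $\widetilde{G}(s)=\tfrac{\gamma_k(s)}{\gamma_k(1-s)}\widetilde{F}(1-s)$ is first obtained on $\re s=1-\sigma<0$, but both sides extend holomorphically to $\re s>-(k-1)/2$ (the left side because of the growth bounds on $G$, the right side because $\widetilde{F}$ is entire and the gamma ratio has its poles at $s\le-(k-1)/2$), so it persists on the line $\re s=\sigma>1$ where you re-insert the Barnes integral. Your route requires $k\ge 2$ so that Lemma \ref{MelBarJ} is available, which is harmless since the paper only invokes the inversion formula for $k\ge 4$.
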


   \begin{lemma}\label{keyspec}
Let  $k \ge 2$, $\re \alpha>0$, and  $\beta, \gamma>0$. Then
\begin{align}\label{keyspecinst}
   \int_{0}^{\infty}  e^{-2\pi \alpha y} J_{k-1}(4\pi \beta\sqrt{y}) J_{k-1}(4\pi \gamma\sqrt{y}) dy  =   \frac{i^{1-k}}{2\pi \alpha}\,  J_{k-1}\left( \frac{4\pi i\beta\gamma}{\alpha}\right) \exp\left(-  \frac{2\pi(\beta^2+\gamma^2)}{\alpha}\right).  
\end{align}   

\end{lemma}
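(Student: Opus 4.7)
The plan is to reduce \eqref{keyspecinst} to Weber's second exponential integral, which is the classical identity
\begin{equation*}
\int_0^\infty e^{-p^2 t^2}\, J_\nu(at)\, J_\nu(bt)\, t\, dt  \,=\,  \frac{1}{2p^2}\, \exp\!\left(-\frac{a^2+b^2}{4p^2}\right) I_\nu\!\left(\frac{ab}{2p^2}\right),
\end{equation*}
valid for $\re(p^2)>0$ and $a,b>0$, with $I_\nu$ denoting the modified Bessel function of the first kind; this is recorded in \cite[\S13.31]{Wa95}. The remaining step will be to pass from $I_{k-1}$ back to $J_{k-1}$ by the standard relation $I_\nu(z) = i^{-\nu} J_\nu(iz)$.

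Concretely, I would first substitute $y = t^2$ (so $dy = 2t\, dt$) in \eqref{keyspecinst} to rewrite the LHS as twice an integral of Weber's form. With the choices $p^2 = 2\pi\alpha$, $a = 4\pi\beta$, $b = 4\pi\gamma$, and $\nu = k-1$, Weber's identity immediately yields
\begin{equation*}
\int_0^\infty e^{-2\pi\alpha y}\, J_{k-1}(4\pi\beta\sqrt{y})\, J_{k-1}(4\pi\gamma\sqrt{y})\, dy \,=\, \frac{1}{2\pi\alpha}\, \exp\!\left(-\frac{2\pi(\beta^2+\gamma^2)}{\alpha}\right) I_{k-1}\!\left(\frac{4\pi\beta\gamma}{\alpha}\right).
\end{equation*}
Applying $I_{k-1}(z) = i^{1-k} J_{k-1}(iz)$, which is immediate from comparing the defining power series of $I_\nu$ and $J_\nu$, then produces the right-hand side of \eqref{keyspecinst}.

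For the extension to complex $\alpha$: Weber's formula is classically stated for real $p>0$, but both sides of \eqref{keyspecinst} are holomorphic on $\{\re\alpha>0\}$. On the LHS this follows from $|e^{-2\pi\alpha y}| = e^{-2\pi(\re\alpha)y}$ together with the bounds $J_{k-1}(u)\ll u^{k-1}$ near $u=0$ from \eqref{Jbessunibd} and $J_{k-1}(u)\ll u^{-1/2}$ as $u\to\infty$ from \eqref{asympJbess}--\eqref{bessder}, which permit differentiation under the integral sign. The identity therefore extends from the positive real axis to the full half-plane by analytic continuation. I do not expect any real obstacle beyond bookkeeping of constants through the substitution; the proof is essentially a reference to Watson's treatise.
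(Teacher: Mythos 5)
Your proof is correct and matches the paper's own approach, which is simply to cite Watson \cite[\S13.31]{Wa95}; the computation with $y=t^2$, the parameter matching $p^2=2\pi\alpha$, $a=4\pi\beta$, $b=4\pi\gamma$, $\nu=k-1$, and the conversion $I_{k-1}(z)=i^{1-k}J_{k-1}(iz)$ all check out. One small remark: Watson already states the formula for $|\arg p|<\pi/4$, i.e.\ $\re(p^2)>0$, so the analytic-continuation paragraph, while harmless and correctly argued, is not strictly necessary.
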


\subsection{Kloosterman sums}\label{Kloospreq}

Let $m,n,c, D\in \Z$ with $c, D \ge 1$ and $c\equiv 0 \ (\bmod\, D)$. For a Dirichlet character $\chi \, (\bmod\, D)$,  the twisted Kloosterman sum is defined by
\begin{align}\label{twisKloos}
 	S_{\chi}(m, n;c) \ := \ \sideset{}{^*}{\sum}_{x \, (\bmod\, c)} \ \overline{\chi}(x) e\left(\frac{  m\overline{x}+ nx}{c}\right).
 \end{align}
  When $\chi$ is trivial, (\ref{twisKloos}) becomes $S(m,n;c)= \sideset{}{^*}{\sum}_{x \, (\bmod\, c)}  e\left(\frac{  m\overline{x}+ nx}{c}\right)$. The following fact is likely well-known, but it is not easy to locate a reference.

 \begin{lemma}\label{basictwisKL}
 If there is a prime $p$ such that  $p\mid m$, $p\nmid n$ and  $p^2 \mid c$, then $S_{\chi}(m,n;c)  =   0$. 
	
\end{lemma}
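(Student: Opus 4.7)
The strategy is to exhibit a nontrivial symmetry of the summand: a shift on $(\Z/c\Z)^{*}$ that leaves $\overline{\chi}(x)$ invariant but multiplies the additive character by a nontrivial $p$-th root of unity. Writing $c=p^{a}c'$ with $(p,c')=1$ and $a\ge 2$, I would try the substitution $x\mapsto x+c/p$. The first check is that this is a bijection on units mod $c$: since $p^{2}\mid c$ we have $p\mid c/p$, and $q\mid c/p$ for every other prime $q\mid c$, so $x+c/p\equiv x$ modulo every prime dividing $c$, which preserves coprimality.

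The heart of the argument is the arithmetic identity
\begin{align*}
\overline{x+c/p}\ \equiv\ \bar{x}-\frac{c}{p}\bar{x}^{2}\pmod{c},
\end{align*}
which I would verify directly via $(x+c/p)\bigl(\bar{x}-(c/p)\bar{x}^{2}\bigr)=1-(c/p)^{2}\bar{x}^{2}\equiv 1\pmod{c}$, where $p^{2}\mid c$ is used exactly to guarantee $c\mid (c/p)^{2}$. Substituting into the Kloosterman phase gives
\begin{align*}
\frac{m\,\overline{x+c/p}+n(x+c/p)}{c}\ \equiv\ \frac{m\bar{x}+nx}{c}\,+\,\frac{n-m\bar{x}^{2}}{p}\pmod{1}.
\end{align*}
Since $p\mid m$, the term $m\bar{x}^{2}/p$ is an integer, so the additional additive factor collapses to the constant $e(n/p)$, which is a nontrivial $p$-th root of unity because $p\nmid n$.

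The remaining ingredient is the invariance of the character, $\overline{\chi}(x+c/p)=\overline{\chi}(x)$, which holds because $D\mid c/p$ in the paper's intended application: for the last sum in (\ref{comparow}) the modulus is $cD$ with $(c,D)>1$, so $p\mid c$ immediately yields $pD\mid cD$. Combining the three ingredients, the bijective change of variable $x\mapsto x+c/p$ rewrites the sum as $e(n/p)\,S_{\chi}(m,n;c)$, giving $\bigl(1-e(n/p)\bigr)S_{\chi}(m,n;c)=0$ and hence $S_{\chi}(m,n;c)=0$.

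The main obstacle is the inverse identity in the second paragraph: producing the correct first-order expansion of $\overline{x+c/p}$ modulo $c$ and recognizing that $p^{2}\mid c$ is exactly the hypothesis that annihilates the quadratic error term $(c/p)^{2}\bar{x}^{2}$. Once this identity is in hand, the bijectivity check and the phase computation are routine, and only the compatibility of $D$ with the shift (to preserve $\overline{\chi}$) needs to be pinned down from context.
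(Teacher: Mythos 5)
Your proof is correct and takes a genuinely different route from the paper's. The paper first reduces, via twisted multiplicativity, to a single Kloosterman sum modulo the prime power $p^{\alpha}$ (with $\alpha = v_p(c)\ge 2$), then splits the summation variable as $x = yp^{\alpha-1}+z$ and extracts the vanishing from the inner $y$-sum. You stay at the composite modulus $c$ and use the single shift $x\mapsto x+c/p$ as a symmetry of the summand: $p^2\mid c$ supplies the inverse expansion $\overline{x+c/p}\equiv \bar x-(c/p)\bar x^2\pmod{c}$, $p\mid m$ annihilates the resulting error term in the phase, and $p\nmid n$ yields the nontrivial multiplier $e(n/p)$. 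Your route is shorter, avoids the multiplicativity step, and makes the role of each hypothesis transparent.

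Your final paragraph puts its finger on a hypothesis that the lemma, as stated, is actually missing. The invariance $\overline{\chi}(x+c/p)=\overline{\chi}(x)$ requires $D\mid c/p$, and this cannot be dropped: taking $\chi$ primitive $\pmod{p^2}$, $D=c=p^2$, $m=p$, $(n,p)=1$, one finds
\begin{align*}
S_\chi(p,n;p^2)\;=\;p\,\overline{\chi}(z_0)\,e\!\left(\frac{\bar z_0}{p}+\frac{nz_0}{p^2}\right)\;\ne\;0
\end{align*}
for the single residue $z_0\pmod p$ at which the inner $y$-sum survives. The paper's own proof glosses over the same point: in the step $\overline{\psi}(1+\bar z y p^{\alpha-1})=e(B\bar z y)=1$, the denominator has silently been dropped from the preceding identity $\overline{\psi}(1+wp^{\alpha-1})=e(Bw/p^{\alpha-1})$; the correct value is $e(B\bar z y/p^{\alpha-1})$, which equals $1$ for all $y$ only when the $p$-component of $\chi$ has conductor strictly smaller than $p^{\alpha}$. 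Both that weaker condition and your $D\mid c/p$ do hold everywhere the lemma is invoked in the paper (the modulus there is $cD$ with $p\mid c$, so $pD\mid cD$), hence nothing downstream is affected; but the hypothesis you flagged ought to appear explicitly in the statement of the lemma.
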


\begin{proof}
By assumptions, we may write  $c=p^{\alpha} c'$ with $\alpha \ge 2$ and $p\nmid c'$. Then 
	\begin{align*}
		S_{\chi}(m,n;c) \ = \  S_{\chi^{(c')}}\left(\bar{p}^{\alpha}m, \bar{p}^{\alpha}n;  c'\right) S_{\chi^{(p^{\alpha})}}(\overline{c'}m, \overline{c'}n; p^{\alpha}). 
	\end{align*}
	 Set $\psi= \chi^{(p^a)}$, $m_{1}=\overline{c'}m$ and $n_{1}=\overline{c'}n$. It suffices to show that
  \begin{align}\label{factwis}
      S_{\psi}(m_{1}, n_{1}; p^{\alpha}) \ &=  \  \sideset{}{^*}{\sum}_{x \, (\bmod\, p^{\alpha})} \overline{\psi}(x) e\left(\frac{m_{1}\overline{x}+n_{1} x}{p^{\alpha}}\right) \ = \ 0,
  \end{align}
  where $m_{1}=pm_{2}$, $m_{2} \in \Z$ and $p\nmid n_{1}$. Write $x=y  p^{\alpha-1} +z$ with $y \,(p) $ and $z\,  (p^{\alpha-1})$. Then
	\begin{align}\label{breakup}
		S_{\psi}(m_{1}, n_{1}; p^{\alpha})
		\ &=  \  \sideset{}{^*}{\sum}_{z \, (\bmod\, p^{\alpha-1})} \ e\left(\frac{m_{2}\overline{z}}{p^{\alpha-1}}\right) e\left(\frac{n_{1}z}{p^{\alpha}}\right) \overline{\psi}(z)
		\   \sum_{y (\bmod\, p)} \ e\left(\frac{n_{1}y}{p}\right) \overline{\psi}(1+\bar{z}yp^{\alpha-1}), 
	\end{align}
	  Note that $w\mapsto \overline{\psi}(1+wp^{\alpha-1})$ is an additive character $(\bmod\, p^{\alpha-1})$ when $\alpha\ge 2$. So, there exists $B\in \Z$ such that $\overline{\psi}(1+wp^{\alpha-1})= e(Bw/p^{\alpha-1})$ for any $w\in \Z$. In particular, $\overline{\psi}(1+\bar{z}yp^{\alpha-1})= e(B\bar{z}y)=1$. Now, the $y$-sum of (\ref{breakup}) vanishes as $p\nmid n_{1}$. 
\end{proof}


\subsection{Summation formulae}

     \begin{lemma}[Poisson summation]\label{lemPois}
Let $c, X>0$ and $c\in \Z$. Let  $V\in C_{c}^{\infty}(\R)$ and $K: \Z\rightarrow \mathbb{C}$ be $c$--periodic.  Then
	\begin{align}
			\sum_{n\in \Z}   K(n)V(n/X)  =   \frac{X}{c} \sum_{m\in \Z}  \,  \bigg( \, \sum_{\gamma (c)} \, K(\gamma)  e_c\left(m\gamma\right)\bigg)    \int_{0}^{\infty}  V(y)\, e\left(-\frac{mXy}{c}\right) dy. 
	\end{align}
\end{lemma}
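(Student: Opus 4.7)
\textbf{Plan for the proof of Lemma \ref{lemPois}.} This is a standard twisted Poisson summation that follows by decomposing the sum on the left according to residue classes modulo $c$ and then applying the classical Poisson summation formula on $\R$ to each residue class. All manipulations are legitimate because $V\in C_c^\infty(\R)$ makes the $n$-sum on the left a finite sum, and integration by parts on the resulting Fourier integrals yields rapid decay in $m$, ensuring absolute convergence of every expression that appears.

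\smallskip

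\textbf{Step 1: Partition by residue classes.} Every integer $n$ can be written uniquely as $n=\gamma+c\ell$ with $\gamma$ ranging over a complete set of residues mod $c$ and $\ell\in\Z$. Using the $c$-periodicity $K(\gamma+c\ell)=K(\gamma)$, the left-hand side becomes
\[
\sum_{n\in\Z} K(n)\,V(n/X) \;=\; \sum_{\gamma\,(c)} K(\gamma)\sum_{\ell\in\Z} V\!\left(\tfrac{\gamma+c\ell}{X}\right).
\]

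\smallskip

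\textbf{Step 2: Classical Poisson on the inner sum.} For each fixed $\gamma$, set $g_\gamma(t):=V((\gamma+ct)/X)$, which is a Schwartz function of $t\in\R$. Classical Poisson gives $\sum_{\ell\in\Z} g_\gamma(\ell) = \sum_{m\in\Z}\widehat{g_\gamma}(m)$. A change of variable $u=(\gamma+ct)/X$ (so $dt=(X/c)\,du$) yields
\[
\widehat{g_\gamma}(m) \;=\; \int_\R V\!\left(\tfrac{\gamma+ct}{X}\right) e(-mt)\,dt \;=\; \frac{X}{c}\, e_c(m\gamma) \int_\R V(u)\, e\!\left(-\tfrac{mXu}{c}\right) du.
\]

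\smallskip

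\textbf{Step 3: Recombine and swap summation.} Substituting back and interchanging the finite $\gamma$-sum with the absolutely convergent $m$-sum produces
\[
\sum_{n\in\Z} K(n)\,V(n/X) \;=\; \frac{X}{c}\sum_{m\in\Z}\Bigl(\,\sum_{\gamma\,(c)} K(\gamma)\,e_c(m\gamma)\Bigr) \int_\R V(u)\,e\!\left(-\tfrac{mXu}{c}\right) du,
\]
which is the claimed identity (the integration range matches the statement since $V$ is compactly supported, so any range containing $\supp V$ gives the same value).

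\smallskip

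\textbf{Expected obstacles.} There are essentially none; the only point that merits a brief remark is the legitimacy of interchanging the $\gamma$-sum and $m$-sum in Step 3, which is justified because the $\gamma$-sum is finite and the Fourier integrals $\int V(u)\,e(-mXu/c)\,du$ decay faster than any polynomial in $|m|$ by repeated integration by parts, using $V\in C_c^\infty$.
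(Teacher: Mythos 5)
Your proof is correct and is the standard argument; the paper itself states this lemma without proof (as a well-known fact), so there is nothing to compare against. One small point worth flagging: the lemma as printed hypothesizes $V\in C_c^\infty(\R)$ yet writes $\int_0^\infty$, whereas your derivation naturally produces $\int_\R$. Your closing remark that "any range containing $\supp V$ gives the same value" is not quite right as stated for a general $V\in C_c^\infty(\R)$ (whose support could meet the negative axis); what is true — and what reconciles the two — is that in every application in the paper $V$ is supported in $(0,\infty)$, so $\int_0^\infty=\int_\R$. Either tighten the hypothesis to $V\in C_c^\infty(0,\infty)$ or keep $\int_\R$ in the conclusion; with that adjustment the argument is airtight.
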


 \begin{lemma}[Petersson trace formula]\label{Peterform}
Let $D\ge 1$ and $k\ge 4$ be integers, and  $\chi \, (\bmod\, D)$ be a Dirichlet character. For any $m, n\ge 1$, 
      \begin{align}\label{Petneb}
 		\sideset{}{^h}{\sum}_{f \in \mathcal{B}_{k}(D, \chi)} \ \overline{a_{f}(m)} a_{f}(n)   =  \delta(m=n) + 2\pi i^{-k}  \sum_{\substack{c>0 \\ c\equiv 0\, (D) }}  \frac{S_{\chi}(m,n;c)}{c} J_{k-1}\left(\frac{4\pi\sqrt{mn}}{c}\right).
 \end{align}
 
 \end{lemma}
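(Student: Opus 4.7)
\medskip

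\noindent\textbf{Proof proposal.} The plan is to execute the classical Poincar\'e series argument, taking advantage of $k \ge 4$ so that all sums and integrals converge absolutely.

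\emph{Step 1 (Poincar\'e series).} For $m\ge 1$, introduce the $m$-th Poincar\'e series of weight $k$, level $D$ and nebentypus $\chi$:
\[
P_{m}(z) \ := \  \sum_{\gamma \in \Gamma_{\infty}\backslash \Gamma_{0}(D)} \ \overline{\chi}(d_{\gamma}) \, j(\gamma,z)^{-k} \, e(m\gamma z), \qquad j(\gamma,z)= c_{\gamma}z+d_{\gamma}.
\]
Since $k\ge 4$, the series converges absolutely and uniformly on compacta, defining an element of $S_{k}(D,\chi)$ (the exponential decay of $e(m\gamma z)$ at the cusp $\infty$ together with a standard majorant at the other cusps gives cuspidality).

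\emph{Step 2 (Spectral side).} For any $f\in S_{k}(D,\chi)$, unfold the Petersson inner product $\langle f, P_{m}\rangle$ against a fundamental domain for $\Gamma_{\infty}\backslash \mathbb{H}$, insert the Fourier expansion \eqref{conven}, and compute the resulting $y$-integral using $\int_{0}^{\infty} e^{-4\pi m y} y^{k-2}\, dy = \Gamma(k-1)/(4\pi m)^{k-1}$. After accounting for the normalisation $n^{(k-1)/2}$ in \eqref{conven}, this yields
\[
\langle f, P_{m}\rangle \ =\  \frac{\Gamma(k-1)}{(4\pi)^{k-1}}\cdot \frac{a_{f}(m)}{m^{(k-1)/2}}.
\]
Expanding $P_{m}$ in any orthogonal basis $\mathcal{B}_{k}(D,\chi)$ via $P_{m} = \sum_{f} \|f\|^{-2} \langle P_{m}, f\rangle f$ and reading off the $n$-th Fourier coefficient $\widehat{P_{m}}(n)$ produces
\[
m^{(k-1)/2} n^{(k-1)/2}\cdot \widehat{P_{m}}(n) \ =\  \sideset{}{^h}{\sum}_{f\in \mathcal{B}_{k}(D,\chi)} \overline{a_{f}(m)} a_{f}(n).
\]

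\emph{Step 3 (Geometric side).} Compute $\widehat{P_{m}}(n)$ directly. Split the coset sum into the identity coset (giving the Kronecker $\delta(m=n)$ after reading off the $n$-th Fourier coefficient) and the cosets with $c_{\gamma}\ne 0$. Parametrising the latter by $\gamma = \bigl(\begin{smallmatrix} a & b \\ c & d\end{smallmatrix}\bigr)$ with $c>0$, $c\equiv 0\,(D)$, $d$ running over residues coprime to $c$, and $a,b$ determined modulo a $\Gamma_{\infty}$-action, one applies $\gamma z = a/c - 1/(c^{2}(z+d/c))$ and performs Poisson summation in $d$ on intervals of length $c$. The arithmetic part collapses to $S_{\chi}(m,n;c)/c$, while the archimedean part becomes the Bessel integral
\[
\int_{-\infty}^{\infty}  \frac{1}{(cu)^{k}}\, e\!\left(-\frac{m}{c^{2}u}- nu\right)\,du \ =\ \frac{2\pi\, i^{-k}}{c}\, J_{k-1}\!\left(\frac{4\pi\sqrt{mn}}{c}\right)\cdot \left(\frac{m}{n}\right)^{(k-1)/2}\cdot (\text{adjustments}),
\]
via the standard integral representation of $J_{k-1}$ (cf. \eqref{bessgamm}). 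Equating the two expressions for $\widehat{P_{m}}(n)$ and multiplying by $(mn)^{(k-1)/2}$ gives exactly \eqref{Petneb}.

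\emph{Main obstacle.} The technical heart is the interchange of summations in Step 3: one must justify switching the spectral expansion with the sum over cosets and with the Poisson summation, and verify absolute convergence of the Kloosterman geometric series $\sum_{c\equiv 0(D)} c^{-1}|S_{\chi}(m,n;c)| |J_{k-1}(4\pi\sqrt{mn}/c)|$. Using Weil's bound $|S_{\chi}(m,n;c)|\ll c^{1/2+\epsilon}(m,n,c)^{1/2}$ and the uniform estimate $J_{k-1}(x)\ll \min(x^{k-1}, x^{-1/2})$ from \eqref{Jbessunibd} and \eqref{asympJbess}, the series converges for any $k\ge 4$ without trouble. (For $k\in\{2,3\}$ this is the step that forces Hecke's trick; happily, the hypothesis $k\ge 4$ renders all interchanges unconditional.) Once convergence is settled, the rest of the proof is bookkeeping.
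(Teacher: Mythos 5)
The paper states the Petersson trace formula as a standard tool in its preliminaries (Lem.~\ref{Peterform}) and does not prove it, so there is no argument in the paper to compare yours against; your Poincar\'e-series derivation is the classical one (cf.\ \cite[Ch.~14]{IK} or Iwaniec's \emph{Topics in Classical Automorphic Forms}), and the outline is essentially sound. Two small corrections are worth flagging.

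First, in Step~2, from $\langle f, P_m\rangle = \frac{\Gamma(k-1)}{(4\pi)^{k-1}}\,a_f(m)\,m^{-(k-1)/2}$ and the finite spectral expansion $P_m=\sum_f \|f\|^{-2}\langle P_m,f\rangle f$, the $n$-th (unnormalised) Fourier coefficient of $P_m$ is
\begin{align*}
\widehat{P_m}(n)\ =\ \Big(\frac{n}{m}\Big)^{\frac{k-1}{2}}\ \sideset{}{^h}{\sum}_{f\in\mathcal{B}_k(D,\chi)}\overline{a_f(m)}\,a_f(n),
\end{align*}
so the relation you want is $(m/n)^{(k-1)/2}\,\widehat{P_m}(n)=\sideset{}{^h}{\sum}_f\overline{a_f(m)}\,a_f(n)$, not $(mn)^{(k-1)/2}\,\widehat{P_m}(n)$. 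That factor $(n/m)^{(k-1)/2}$ is precisely what the ``adjustments'' in your Bessel-integral display must supply on the geometric side, so the final formula \eqref{Petneb} still emerges; but the intermediate identity as written is off.

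Second, the absolute convergence of the Kloosterman--Bessel series is not the real bottleneck: for fixed $m,n$, even the trivial bound $|S_\chi(m,n;c)|\le c$ together with $J_{k-1}(y)\ll_k y^{k-1}$ as $y\to 0$ already gives a summand $\ll c^{1-k}$, convergent for $k>2$ without appealing to Weil. What genuinely uses $k\ge 3$ (with $k\ge 4$ giving a comfortable margin) is the absolute convergence of the Poincar\'e series $P_m$ itself in Step~1; that, rather than the geometric series, is where Hecke's trick would be required if one pushed to $k=2$.
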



\section{Proof of Theorem \ref{FourVorthm}}\label{mainbody}

Suppose $\ell, D, k$ are integers such that $\ell\ge 1$, $D>1$ and $k\ge 4$. Suppose $\chi\, (\bmod\, D)$ is a  Dirichlet character such that $\chi(-1)=(-1)^k$. Let $g\in C_{c}^{\infty}(0,\infty)$ be fixed. We define
\begin{align}\label{truncIsum}
I_k(\ell; D, \chi)  \ := \  \sum_{n=1}^{\infty} \, g(n)  \sideset{}{^h}{\sum}_{f \in \mathcal{B}_{k}(D, \chi)} \, \overline{a_{f}(\ell)} a_{f}(n).  
\end{align} 


\subsection{Step 1: Petersson--Poisson}\label{Petersect}

Apply the Petersson formula, open up the Kloosterman sum by its definition, and rearrange the summations, (\ref{truncIsum}) is given by
\begin{align}
I_k(\ell; D, \chi) \ = \  g(\ell) \ + \  2\pi i^{-k} \sum_{c=1}^{\infty} \, (cD)^{-1}  \sideset{}{^*}{\sum}_{ x \, (cD)} \,   \overline{\chi}(x) e\left(\frac{\ell \overline{x}}{cD}\right)  \sum_{n\in \mathbb{Z}} \, g(n) J_{k-1}\left(\frac{4\pi\sqrt{\ell n}}{cD}\right)  e\left(\frac{nx}{cD}\right). \nonumber
\end{align}
It follows from Poisson summation $(\bmod\, cD)$ to the $n$-sum that  
\begin{align}\label{evagammsum}
	I_k(\ell; D, \chi) =   g(\ell)  +    2\pi i^{-k}  \sum_{c=1}^{\infty} (cD)^{-1} \sideset{}{^*}{\sum}_{ x \, (cD)}   \overline{\chi}(x) e\left(\frac{\ell \overline{x}}{cD}\right)    \sum_{\substack{m\in \mathbb{Z}  \\ m\equiv -x \, (cD)}}  \int_{\R}  g(y)  J_{k-1}\left(\frac{4\pi\sqrt{\ell y}}{cD}\right)  e\left(-\frac{my}{cD}\right) dy.
\end{align}

When $m=0$,  we have  $x \equiv 0 \  (cD)$ and $(x,cD)=1$ in (\ref{evagammsum}),  which imply $cD =1$. This is impossible if $D>1$. In other words,  the dual zeroth frequency is \emph{absent} in this case and
\begin{align}\label{resulPetPo}
		I_k(\ell; D, \chi)   =   g(\ell)  +    2\pi i^{-k} \chi(-1) \, \sum_{c=1}^{\infty} \sum_{\substack{m\neq 0 \\ (m, cD)=1}}    \frac{\overline{\chi}(m)}{cD} e\left(-\frac{\ell \overline{m}}{cD}\right)  \int_{\R}  g(y)  J_{k-1}\left(\frac{4\pi\sqrt{\ell y}}{cD}\right)  e\left(-\frac{my}{cD}\right) \, dy.
\end{align}

\subsection{Step 2: Analytic--arithmetic reicprocity}\label{sect:step2}

With the rapid decay of $(\mathcal{H}_{k}g)(x)$ as $x \to 0$ and $x\to\infty$, it follows from Lem. \ref{hankelinv}--\ref{keyspec} and dominated convergence theorem that
\begin{align}
	 \int_{\R}  g(y)  J_{k-1}\left(\frac{4\pi\sqrt{\ell y}}{cD}\right)  e\left(-\frac{my}{cD}\right) \, dy  
	=  \frac{cD}{i^{k}m}  e\left( \frac{\ell}{cDm}\right)  \int_{0}^{\infty}\ (\mathcal{H}_{k}g)(x)   J_{k-1}\left( \frac{4\pi\sqrt{\ell x}}{m}\right) e\left(\frac{cDx}{m}\right) dx.  
\end{align}
Plugging the last expression into (\ref{resulPetPo}) and  applying the  additive reciprocity $\frac{\overline{m}}{cD}  +  \frac{\overline{cD}}{m}   \equiv   \frac{1}{cDm} \,   (\bmod\, 1) $
to the phase $e\left(-\ell \overline{m}/ cD\right)$ in (\ref{resulPetPo}), it follows that  
\begin{align}
	I_k(\ell; D, \chi)  =  g(\ell)  +    2\pi i^{-2k} \chi(-1) \sum_{c=1}^{\infty}  \sum_{\substack{m\neq 0 \\ (m, cD)=1}}     \frac{\overline{\chi}(m)}{m}  e\left(\frac{\ell \, \overline{cD}}{m}\right)    \int_{0}^{\infty} (\mathcal{H}_{k}g)(x)   J_{k-1}\left( \frac{4\pi\sqrt{\ell x}}{m}\right) e\left(\frac{cDx}{m}\right)  dx. \label{compareHer}
\end{align}


\subsection{Step 3: Analytic preparation for Poisson}\label{poisprep}

Note: $i^{-2k}\chi(-1)=1$. Firstly, we have
\begin{align*}
I_k(\ell; D, \chi)   =    g(\ell)  + \sum_{\pm}   2\pi  \, \sum_{c=1}^{\infty} \, \sum_{\substack{m> 0 \\ (m, cD)=1}} \,    \frac{\overline{\chi}(\pm m)}{\pm m}  e\left(\frac{\ell \, \overline{cD}}{\pm m}\right)  \int_{0}^{\infty} (\mathcal{H}_{k}g)(x)   J_{k-1}\left( \frac{4\pi\sqrt{\ell x}}{\pm m}\right) e\left(\frac{cDx}{\pm m}\right) dx.  
\end{align*}
Secondly,  the change $c\to -c$, $J_{k-1}(-y)=(-1)^{k-1} J_{k-1}(y)$ and  $\chi(-1)=(-1)^k$ lead to
\begin{align}
	I_k(\ell; D, \chi) =   g(\ell)  +    2\pi   \sum_{c\neq 0}  \sum_{\substack{m> 0 \\ (m, cD)=1}}     \frac{\overline{\chi}(m)}{m}  e\left(\frac{\ell \, \overline{cD}}{m}\right)       \int_{0}^{\infty} (\mathcal{H}_{k}g)(x)   J_{k-1}\left( \frac{4\pi\sqrt{\ell x}}{m}\right) e\left(\frac{cDx}{m}\right)  dx. \nonumber
\end{align}
Thirdly, observe that if one takes $c=0$ in the $m$-sum above, then  such a sum consists of a single term ($m=1$) and it is given by $2\pi\,   \int_{0}^{\infty} (\mathcal{H}_{k}g)(x)   J_{k-1}( 4\pi\sqrt{\ell x})  dx  =   g(\ell)$
using Lem. \ref{hankelinv}.  In other words, the diagonal term from the Petersson formula completes the $c$-sum after the step of applying `reciprocity', i.e., 
\begin{align}\label{harmresu}
	I_k(\ell; D, \chi)  =    2\pi  \, \sum_{c\in \mathbb{Z}} \, \sum_{\substack{m> 0 \\ (m, cD)=1}}    \frac{\overline{\chi}(m)}{m}  e\left(\frac{\ell \, \overline{cD}}{m}\right)     \int_{0}^{\infty} (\mathcal{H}_{k}g)(x)   J_{k-1}\left( \frac{4\pi\sqrt{\ell x}}{m}\right) e\left(\frac{cDx}{m}\right) \ dx. 
\end{align}
Fourthly, we make the change of variables $x\to xm/D$:
\begin{align}\label{compforPois}
    I_k(\ell; D, \chi)  =   \frac{2\pi}{D}   \sum_{c\in \mathbb{Z}} \, \sum_{\substack{m> 0 \\ (m, cD)=1}}     \overline{\chi}(m)  e\left(\frac{\ell \, \overline{cD}}{m}\right) \int_{0}^{\infty} (\mathcal{H}_{k}g)\left( \frac{mx}{D}\right)    J_{k-1}\left( 4\pi\sqrt{\frac{\ell x}{mD}}\right) e\left(cx\right)  dx. 
\end{align}

We must check that the double sum of (\ref{harmresu}) converges absolutely. This is indeed the case provided that  $k\ge 4$. Integrating by parts twice, for $c\neq 0$, the $x$-integral of (\ref{harmresu}) is
\begin{align}\label{IBPtwi}
  \, \ll \,   \left(\frac{cD}{m}\right)^{-2}   \sum_{i=0}^{2} \, \int_{0}^{\infty} \bigg| \partial_{x}^{2-i} (\mathcal{H}_{k}g)(x) \, \partial_{x}^{i} \left(J_{k-1}\left( \frac{4\pi\sqrt{\ell x}}{m}\right)\right)\bigg|  dx.
\end{align}

In the following, we appeal to the rapid decay of $\mathcal{H}_{k}g$ and (\ref{basicHank})--(\ref{recur}). When $i=0$, the integral  of (\ref{IBPtwi}) is $\ll_{k,\ell} m^{1-k}  \{ 1  +  \int_{0}^{1} \, x^{\frac{k-5}{2}}  x^{\frac{k-1}{2}} dx \}  \ll_{k} m^{1-k}$. For $x>0$, 
observe that $\partial_{x} \big(J_{k-1}\big( \frac{4\pi\sqrt{\ell x}}{m}\big)\big) 
      \ll_{k, \ell} m^{1-k}\max\{ x^{\frac{k-3}{2}}, x^{\frac{k-1}{2}}\} $. Therefore,  the integral  of (\ref{IBPtwi}) when $i=1$ is  $ \ll_{k, \ell}   m^{1-k}\{ 1  + \int_{0}^{1}  x^{\frac{k-3}{2}} x^{\frac{k-3}{2}} dx\}  \ll  m^{1-k}$.
Similarly, we have $ \partial_{x}^2 (J_{k-1}( \frac{4\pi\sqrt{\ell x}}{m}))  \ll_{k, \ell}  x^{\frac{k-5}{2}}/m^{k-1}$
for $0<x<1$ and the integral of (\ref{IBPtwi}) when $i=2$ is  $\ll_{k, \ell} m^{1-k}$. For $k\ge 4$, we conclude that the double sum of (\ref{harmresu})   converges absolutely.


\subsection{Step 4: Poisson}\label{secPois}

Let $\epsilon>0$ be given and  $h_{\epsilon}$ be a smooth function on $\R$ such that $h_{\epsilon}\equiv 1$ on $[\epsilon, \infty)$, $h_{\epsilon}\equiv 0$ on $(-\infty, 0]$, and $0\le h_{\epsilon}\le 1$ on $(0, \epsilon)$. Define $F_{\epsilon}: \R \rightarrow \C$ by 
\begin{align*}
	F_{\epsilon}(x)  :=  (\mathcal{H}_{k}g)(mx/D)   J_{k-1}( 4\pi\sqrt{\ell x/mD}) h_{\epsilon}(x).   
\end{align*}
Using the decay of $J_{k-1}(y)$ as $y\to 0$, observe that (\ref{harmresu}) can be rewritten as
\begin{align}
	I_k(\ell; D, \chi)  =    \frac{2\pi}{D}  \,  \sum_{\substack{m> 0 }} \,   \overline{\chi}(m) \, \cdot \,  \lim_{\epsilon\to 0+} \,  \sum_{\substack{ (c, m)=1}}  e\big(\frac{\ell \, \overline{cD}}{m}\big)   \check{(F_{\epsilon})}(c)
\end{align}
upon interchanging the order of the $c$-sum and $m$-sum. Poisson in $c$ $(\bmod\, m)$ gives 
\begin{align}\label{perturbPo}
		I_k(\ell; D, \chi)  \ &= \  \frac{2\pi}{D}  \,  \sum_{\substack{m> 0 }} \,   \overline{\chi}(m) \cdot \frac{1}{m} \, \lim_{\epsilon\to 0+} \, \sum_{n\in \mathbb{Z}} \, S(n, \ell\overline{D}; m) F_{\epsilon}(n/m).
\end{align}
By the rapid decay of $\mathcal{H}_{k}g$ and the bound $J_{k-1}(x)\ll_{k} x^{k-1}$, the expression on right side of  (\ref{perturbPo}) without  `$\lim_{\epsilon\to 0+}$'  is $ \ll_{k, A}  D^{-1} \sum_{m, n>0}   \left(\frac{n}{D}\right)^{-A}( \frac{1}{m}\sqrt{\frac{\ell n}{D}})^{k-1} \ \ll_{k, A, \ell, D} \  1$,
provided $k\ge 4$ and $A>(k+1)/2$.  Dominated convergence and Fubini's theorems give
\begin{align}\label{secoPoi}
  I_k(\ell; D, \chi)    \ &= \  2\pi  \,  \sum_{n>0} \, (\mathcal{H}_{k}g)\left(\frac{n}{D}\right)\, 
 \sum_{\substack{c> 0 }} \,   \frac{\overline{\chi}(c)}{cD}\, S(n, \ell\overline{D}; c)     J_{k-1}\left( \frac{4\pi \sqrt{\ell nD}}{cD}\right).
\end{align}


\subsection{Step 5: Twisted multiplicativity}\label{arithsum}

Suppose $(c,D)=1$. The reduced residue class $(\bmod\, cD)$ can be expressed as $c\overline{c} \alpha + D\overline{D} \beta$ with $\alpha$, $\beta$ over the reduced residue classes $(\bmod\, D)$,  $(\bmod\, c)$ respectively. 
Write $\ell= \ell_{0}\ell'$ with $\ell_{0}:=(\ell, D^{\infty})$ and $(\ell', D)=1$. Then
\begin{align}
    S_{\chi}(\ell_{0}Dn, \ell'; cD) 
    \ &= \ \sideset{}{^*}{\sum}_{\beta (c)} \ e\left( \frac{\ell_{0}n\, \overline{\beta}+\ell'\, \overline{D}\beta}{c}\right) \  \sideset{}{^*}{\sum}_{\alpha (D)}\, \overline{\chi}(\alpha) e\left(\frac{\ell' \, \overline{c}\alpha}{D}\right). \nonumber
\end{align}
Since $\ell_{0}\mid D^{\infty}$ and $(c,D)=1$ in (\ref{secoPoi}),  we have $(\ell_{0}, c)=1$. The change of variables $\beta \to \ell_{0}\beta$ and the \emph{primitivity} of $\chi\,  (\bmod\, D)$ imply that
\begin{align}\label{twisnotw}
     S_{\chi}(\ell_{0}Dn, \ell'; cD)  \  = \  S(n, \ell\overline{D}; c) \overline{\chi}(c) \chi(\ell') \epsilon_{\overline{\chi}}\sqrt{D}. 
\end{align}
It is essential to observe that (\ref{twisnotw}) remains valid even when $(c,D)>1$, by Lem. \ref{basictwisKL}.


\subsection{Step 6: Petersson in reverse}\label{Petinrev}

Since $D>1$ and $\chi\,  (\bmod\, D)$ is primitive, we have
\begin{align}
	I_k(\ell; D, \chi)  
	\ &= \  \frac{i^{k}\chi(-1) \overline{\chi}(\ell')\epsilon_{\chi}}{\sqrt{D}}   \, \sum_{n>0}  \, (\mathcal{H}_{k}g)\left(\frac{n}{D}\right) \, \cdot\, 2\pi i^{-k}  \sum_{\substack{c> 0 }} \,   \frac{  S_{\chi}(\ell_{0}Dn, \ell'; cD)}{cD} \,       J_{k-1}\left( \frac{4\pi \sqrt{\ell nD}}{cD}\right)\label{finalgeo}
\end{align}
from  $|\epsilon_{\chi}|=1$, (\ref{secoPoi}) and (\ref{twisnotw}).
Now, the Petersson formula can be readily applied to the innermost $c$-sums above and thus, 
\begin{align}
    I_k(\ell; D, \chi)  
	\ &= \ \frac{i^{k}\chi(-1) \overline{\chi}(\ell')\epsilon_{\chi}}{\sqrt{D}}  \ \  \sideset{}{^h}{\sum}_{f \in \mathcal{B}_{k}(D, \chi)} \, a_{f}\left(\ell'\right) \, \sum_{n>0}  \,  \overline{a_{f}(\ell_{0}Dn)} (\mathcal{H}_{k}g)\left(\frac{n}{D}\right).\nonumber
\end{align}
Notice that the diagonal contribution $\delta(\ell_{0} D n=\ell')$ from the Petersson formula is identically zero as $D>1$!  This completes the proof of Thm. \ref{FourVorthm}.


\section{Analytic continuation and polynomial growth}\label{ACGproof}

In this section, we show that $\mathcal{L}(s,f)$ can be analytically continued to the left of $\re s=1/2$ for any $f\in S_{k}(D, \chi)$, using (\ref{holoDS}) and (\ref{resulPetPo}). To deduce the functional equation of $\mathcal{L}(s,f)$, it is also necessary to establish \emph{polynomial growth} for
\begin{align}\label{avgACcont}
\mathcal{A}_{\ell}(s) :=   \sideset{}{^h}{\sum}_{f \in \mathcal{B}_{k}(D, \chi)}  \overline{a_{f}(\ell)} \mathcal{L}(s,f)
\end{align}
within the strip $0< \re s<1$ and as $|\im s|\to \infty$.

\begin{prop}\label{polygrAC}
Let  $\ell\ge 1$ and $k\ge 4$ be integers. Then  $\mathcal{A}_{\ell}(s)$ admits a holomorphic continuation to the half-plane \,$\sigma > -(k-4)/2$ and satisfies the estimate 
    \begin{align}\label{preast}
    \mathcal{A}_{\ell}(s) \  \ll_{k, \ell, D, \chi, \sigma} \ (1+|t|)^{k-2}
\end{align}
for any $s=\sigma+it$ with\,  $\sigma > -(k-4)/2 $ and $t\in \R$. 
\end{prop}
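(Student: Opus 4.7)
My strategy is to apply identity (\ref{resulPetPo}) to a smooth approximation of $n^{-s}$ and analyze the resulting oscillatory dual sum. Fix a dyadic smooth partition of unity $\{W_N\}_{N \geq 0}$ on $[1, \infty)$, with each $W_N \in C_c^\infty(0, \infty)$ supported in $[2^{N-1}, 2^{N+1}]$. For $\re s$ large,
\[
\mathcal{A}_\ell(s) \ = \ \sum_{N \geq 0} I_k^{(N)}(s), \qquad I_k^{(N)}(s) \ := \ \sum_n \frac{W_N(n)}{n^s} \sideset{}{^h}{\sum}_{f \in \mathcal{B}_k(D, \chi)} \overline{a_f(\ell)}\, a_f(n).
\]
Since $g_N(y) := W_N(y)\, y^{-s}$ lies in $C_c^\infty(0,\infty)$, identity (\ref{resulPetPo}) applies to each $I_k^{(N)}(s)$. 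Summing the diagonal contributions (which telescope to $\ell^{-s}$) and interchanging the $N$-sum with the dual $(c, m)$-sum yields a candidate representation
\[
\mathcal{A}_\ell(s) \ = \ \ell^{-s} \, + \, 2\pi i^{-k}\chi(-1) \sum_{c \geq 1} \sum_{\substack{m \neq 0 \\ (m, cD)=1}} \frac{\overline\chi(m)}{cD}\, e\!\left(-\frac{\ell \overline m}{cD}\right) \mathcal{I}(s; c, m),
\]
where $\mathcal{I}(s; c, m) := \int_0^\infty y^{-s} J_{k-1}(4\pi\sqrt{\ell y}/cD)\, e(-my/cD)\, dy$, interpreted as the sum of its dyadic pieces.

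The heart of the proof is to estimate $\mathcal{I}(s; c, m)$ uniformly in $(c, m, s)$. Substituting the asymptotic (\ref{asympJbess}) for $J_{k-1}$, each dyadic piece of $\mathcal I(s; c, m)$ becomes an integral of the shape $\int W_N(y)\, y^{-s-1/4}\, (cD)^{1/2}\ell^{-1/4}\, e(\Phi(y))\, dy$ with phase $\Phi(y) = \pm 2\sqrt{\ell y}/cD - my/cD - t\log y/(2\pi)$. I would apply Lemma \ref{BKYibp} on each dyadic range $y \asymp 2^N$: in the non-stationary regime (where $|\Phi'|$ is bounded below) integration by parts yields arbitrary polynomial decay in $|m|/cD$, $|t|/2^N$, and $\sqrt{\ell \cdot 2^N}/cD$; in the stationary regime the contribution is controlled by $2^{N(1/2-\sigma)}\,|\Phi''(y_0)|^{-1/2}$ times bounded factors, with the stationary point $y_0$ determined by $(t, m, c)$.

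Assembling these estimates, the $(c, m, N)$-sum should converge absolutely precisely when $\sigma > -(k-4)/2$, matching the convergence threshold established for (\ref{harmresu}) in Section \ref{poisprep} --- no coincidence, since both reflect the same analytic pressure exerted by $J_{k-1}$ at infinity. The polynomial growth $(1+|t|)^{k-2}$ should emerge from tracking powers of $|t|$ through the stationary phase estimates, with the dominant contribution coming from dyadic scales $2^N \asymp 1 + |t|$ where $\Phi$ has a critical point.

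The main obstacle will be the careful oscillatory analysis in the stationary regime, together with the justification of the interchange $\sum_N \leftrightarrow \sum_{c, m}$ via a dominated convergence argument --- both requiring uniform bounds that are sharp enough. The constraint $k \geq 4$ mirrors its role in Section \ref{poisprep}, suggesting that essentially the same integration-by-parts technology from that section, now also tracking the $s$-dependence, will suffice to establish both the analytic continuation and the stated polynomial growth.
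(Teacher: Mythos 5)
There is a genuine gap. Your plan replaces $J_{k-1}$ by its oscillatory decomposition (\ref{asympJbess}) uniformly over all $c$, but the derivative bound (\ref{bessder}) on $W_k$ — which produces the $(cD)^{1/2}\ell^{-1/4}\, 2^{-N/4}$ amplitude you use to feed Lemma \ref{BKYibp} — is only valid when the Bessel argument $\sqrt{\ell\, 2^N}/cD$ exceeds $1$, i.e.\ for $c \ll \sqrt{\ell\, 2^N}/D$. For larger $c$ the argument is small, $W_k$ is not $\gg$ the argument to the $-1/2$ (it decays like a power of the argument, since $J_{k-1}(y)\ll y^{k-1}$), and your $W$-factor in the stationary-phase lemma is simply wrong; moreover, since $|\Phi'|\gtrsim |m|/cD$ \emph{shrinks} with $c$, the IBP savings degrade there too, so the $(c,m,N)$-sum would not converge on any reasonable right half-plane with your bounds. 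The threshold $\sigma>-(k-4)/2$ is asserted by analogy with Sect.~\ref{poisprep}, not derived, and the claimed decay ``in $\sqrt{\ell\,2^N}/cD$'' does not arise from the stationary-phase estimate itself.

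The paper's proof (Lemma~\ref{weakt-asp}) resolves exactly this: the $c$-sum is split at $c\asymp \sqrt{\ell X}/D$. In the large-$c$ range the Bessel function is handled by the Mellin--Barnes representation of Lemma~\ref{MelBarJ} together with integration by parts and the Stirling estimate (\ref{ratiostir}), giving the bound (\ref{largecest})--(\ref{smaest}) with the crucial $X^{1-r/2}$ decay. Only in the small-$c$ range (where the Bessel argument is $\geq 20$) is the asymptotic (\ref{asympJbess}) substituted and Lemma~\ref{BKYibp} applied, giving (\ref{larest}). Combining the two and choosing $r = A = k-2$ yields $X^{-(k-4)/2}$, and summing over dyadic $X=2^u$ gives the half-plane $\sigma>-(k-4)/2$. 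Your dyadic framework and use of Lemma~\ref{BKYibp} are in the right spirit, but without the $c$-range dichotomy and the Mellin--Barnes input the argument cannot close, and the interchange $\sum_N\leftrightarrow\sum_{c,m}$ you flag is avoidable by estimating each dyadic block separately, as the paper does.
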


We begin our argument by inserting a smooth partition of unity. Let $g\in C_{c}^{\infty}[1,2]$ such that $1 =   \sum_{u\in \Z} \ g(x/2^u)$ for any $x>0$. Then for $\re s\gg  1$, we have
\begin{align*}
     \mathcal{L}(s,f) \, := \, \sum_{n=1}^{\infty} \, a_{f}(n)n^{-s} \, = \,  \sum_{u = -1}^{\infty}  \, \frac{1}{2^{us}} \, \sum_{n} \, a_{f}(n) G_{s}(n/2^u), \hspace{5pt} \text{where} \hspace{5pt} G_{s}(y) \, := \,  y^{-s} g(y). 
\end{align*}
Let 
\begin{align*}
    \mathcal{I}_{s}(X;\ell)  \, := \, \sideset{}{^h}{\sum}_{f \in \mathcal{B}_{k}(D, \chi)} \, \overline{a_{f}(\ell)} \, \sum_{n} \, a_{f}(n)G_{s}(n/X) \hspace{5pt} \text{ and } \hspace{5pt} \mathcal{A}_{\ell}(s)  :=   \sideset{}{^h}{\sum}_{f \in \mathcal{B}_{k}(D, \chi)} \, \overline{a_{f}(\ell)} \mathcal{L}(s,f)  =    \sum_{u=-1}^{\infty} \frac{\mathcal{I}_{s}(2^u;\ell)}{2^{us}}. 
\end{align*}
For  $X> 2\ell$, we have $G_{s}(\ell/X)=0$, and from  (\ref{resulPetPo}), it follows that \footnote{We have suppressed the less important dependence on $D,k, \chi$ in the notation $\mathcal{I}_{s}(X;\ell)$. }
\begin{align*}
  \mathcal{I}_{s}(X;\ell) =\frac{2\pi i^{-k} \chi(-1)}{D}   \sum_{c=1}^{\infty} \,  \sum_{\substack{m\neq 0 \\ (m, cD)=1}} \  \overline{\chi}(m) e\left(-\frac{\ell \overline{m}}{cD}\right)\,  \frac{X}{c}\,  \int_{\R}  G_{s}(y)  J_{k-1}\left(\frac{4\pi\sqrt{\ell yX}}{cD}\right)  e\left(-\frac{myX}{cD}\right)  dy,  
\end{align*}
\begin{align}\label{boundabs}
    \mathcal{I}_{s}(X; \ell) \ \ll_{ D} \  \, \sum_{c=1}^{\infty} \,  \ \sum_{m\neq 0} \ \frac{X}{c} \left| \, \int_{\R} \, G_{s}(y)  J_{k-1}\left(\frac{4\pi\sqrt{\ell yX}}{cD}\right)  e\left(-\frac{myX}{cD}\right) \, dy\right|. 
\end{align}

\begin{lemma}\label{weakt-asp}
Let $k\ge 4$. For any $X> 2\ell$ and  $s= \sigma+it$ with\, $\sigma, t\in \R$. Then
    \begin{align}
         \mathcal{I}_{s}(X;\ell) \ \ll_{k, \ell, D, \sigma} \ \left(1+|t|\right)^{k-2} X^{-(k-4)/2}.\label{larbd} 
    \end{align}

\end{lemma}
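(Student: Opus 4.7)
The plan is to bound each oscillatory integral
\[
\mathcal{J}(c,m) := \int_{\R} G_{s}(y) \, J_{k-1}\!\left(\frac{4\pi \sqrt{\ell y X}}{cD}\right) e\!\left(-\frac{m y X}{cD}\right) dy
\]
appearing in (\ref{boundabs}) via direct integration by parts exactly $N := k-2$ times against the exponential $e(-myX/(cD))$. Writing $B := 2\sqrt{\ell X}/(cD)$ for the Bessel argument scale and $M := |m|X/(cD)$ for the exponential frequency, and noting that the boundary terms vanish by the compact support of $g$, one gets
\[
|\mathcal{J}(c,m)| \ \ll \ M^{-(k-2)} \sup_{y \in [1,2]} \bigl| \partial_y^{k-2} \bigl[ G_s(y) \, J_{k-1}(4\pi B \sqrt{y}) \bigr] \bigr|.
\]
Since $G_s(y) = y^{-s} g(y)$ with $g \in C_c^\infty[1,2]$ independent of $s$, one has $|\partial_y^j G_s(y)| \ll_{j,\sigma} (1+|t|)^j$ uniformly on $[1,2]$.

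I would then split the $c$-sum at the threshold $c^{*} := 2\sqrt{\ell X}/D$ (equivalently $B \lessgtr 1$). In the range $c \ge c^{*}$ where $B \le 1$, the Taylor expansion of $J_{k-1}$ at zero gives $|\partial_y^j J_{k-1}(4\pi B \sqrt{y})| \ll_{j,k} B^{k-1}$ uniformly on $[1,2]$. Leibniz then bounds $|\mathcal{J}(c,m)| \ll_k (1+|t|)^{k-2} B^{k-1} M^{-(k-2)}$; substituting $B$ and $M$, and summing $\tfrac{X}{c}|\mathcal{J}(c,m)|$ over $c \ge c^{*}$ and $|m| \ge 1$ produces the desired bound $\ll_{k,\ell,D,\sigma}(1+|t|)^{k-2} X^{-(k-4)/2}$. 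Absolute convergence of both the $c$-sum ($\sim \sum c^{-2}$) and the $m$-sum ($\sim \sum |m|^{-(k-2)}$) requires $k \ge 4$.

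In the complementary range $c < c^{*}$ (so $B > 1$), I would use the asymptotic expansion (\ref{asympJbess})--(\ref{bessder}) together with the chain rule to establish the derivative bound $|\partial_y^j J_{k-1}(4\pi B \sqrt{y})| \ll_{j,k} B^{j - 1/2}$ for $y \in [1,2]$, $B > 1$. Leibniz then yields
\[
|\mathcal{J}(c,m)| \ \ll_k \ M^{-(k-2)} B^{-1/2} \bigl(B + 1 + |t|\bigr)^{k-2} \ \ll_k \ M^{-(k-2)} B^{-1/2} \bigl( B^{k-2} + (1+|t|)^{k-2}\bigr).
\]
The $B^{k-2}$ piece is tamed by the bound $B/M = 2\sqrt{\ell/X}/|m| \ll_\ell 1/|m|$, which is valid because of the hypothesis $X > 2\ell$ and contributes the decay in $|m|$ needed for summability; the $(1+|t|)^{k-2}$ piece carries the expected polynomial-in-$t$ factor. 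Each summand, after multiplying by $\tfrac{X}{c}$ and summing over $c < c^{*}$ and $|m| \ge 1$, yields $\ll_{k,\ell,D,\sigma}(1+|t|)^{k-2} X^{-(k-4)/2}$, completing the proof upon combining the two ranges.

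The main obstacle is the second range: rather than attempting to isolate the oscillation of the Bessel function through stationary phase---an approach complicated by the stationary point $y_0 = 4\ell/(|m|^2 X)$ possibly touching $[1,2]$ when $|m|=1$ and $X$ is close to $2\ell$---it is cleaner to absorb all Bessel derivatives into the amplitude and rely only on the polynomial growth $B^{j-1/2}$ of derivatives. The hypothesis $X > 2\ell$ then intervenes solely through $B/M \ll_\ell 1/|m|$, so that the $(B/M)^{k-2}$ factor supplies the needed $|m|$-decay. The condition $k \ge 4$ is necessary throughout for absolute convergence of the iterated sums.
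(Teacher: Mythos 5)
Your approach is genuinely different from the paper's and more elementary. The paper handles the large-$c$ range via the Mellin--Barnes representation of $J_{k-1}$ (Lemma \ref{MelBarJ}) followed by integration by parts on the Mellin integral, and handles the small-$c$ range by writing $J_{k-1}$ through (\ref{asympJbess})--(\ref{bessder}) and applying Lemma \ref{BKYibp}, i.e.\ treating both the Bessel oscillation and $e(-myX/(cD))$ as phase. You instead integrate by parts directly against $e(-myX/(cD))$ in both ranges and absorb all Bessel derivatives into the amplitude, bounding $\partial_y^j J_{k-1}(4\pi B\sqrt{y}) \ll B^{k-1}$ for $B\le 1$ and $\ll B^{j-1/2}$ for $B>1$. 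This sidesteps the stationary-phase considerations entirely. Your derivative bounds are correct, and the large-$c$ range works out to $(1+|t|)^{k-2}X^{-(k-4)/2}$ as claimed.

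There is, however, a genuine gap in the $X$-power accounting for the small-$c$ range. Your Leibniz bound gives
\[
|\mathcal{J}(c,m)| \ \ll_k\ M^{-(k-2)}B^{-1/2}\bigl(B^{k-2}+(1+|t|)^{k-2}\bigr),
\]
and for the $B^{k-2}$ piece you invoke $B/M = 2\sqrt{\ell/X}/|m| \ll_\ell 1/|m|$, asserting that this supplies the $|m|$-decay ``needed for summability.'' But $\ll_\ell 1/|m|$ throws away the essential factor $X^{-1/2}$. If one literally uses $(B/M)^{k-2} \ll_\ell |m|^{-(k-2)}$ with no $X$-dependence, then
\[
\sum_{c<c^{*}}\frac{X}{c}\,(B/M)^{k-2}B^{-1/2}
\ \ll_{\ell,D}\ \sum_{c<c^{*}}\frac{X^{3/4}}{c^{1/2}|m|^{k-2}}
\ \asymp\ \frac{X}{|m|^{k-2}},
\]
which is off by the large factor $X^{(k-2)/2}$ from the target $X^{-(k-4)/2}$. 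The argument only closes if one retains $(B/M)^{k-2} \asymp_\ell X^{-(k-2)/2}/|m|^{k-2}$ precisely; then the $c$-sum of $\tfrac{X}{c}(B/M)^{k-2}B^{-1/2}$ over $c<c^{*}\asymp\sqrt{X}$ produces $X^{1-(k-2)/2}/|m|^{k-2} = X^{-(k-4)/2}/|m|^{k-2}$ as required. You should make this $X$-power bookkeeping explicit rather than asserting the final bound; as written, the intermediate estimate you quote does not justify the conclusion. (You also write that $X>2\ell$ intervenes ``solely'' through $B/M\ll 1/|m|$; in fact $B/M\asymp_\ell |m|^{-1}X^{-1/2}$ unconditionally, and the hypothesis $X>2\ell$ is really what forces $G_s(\ell/X)=0$ upstream in (\ref{boundabs}).)
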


\begin{proof}[Proof of Lemma \ref{weakt-asp}]

Split the $c$-sum of (\ref{boundabs}) into two parts, depending on $c>\sqrt{\ell X}/20D$ and  $c\le \sqrt{\ell X}/20D$, which are denoted by  $ \mathcal{I}_{s, >}(X;\ell)$ and $\mathcal{I}_{s, \le}(X; \ell)$ respectively.

Consider $ \mathcal{I}_{s, >}(X;\ell)$. By Lem. \ref{MelBarJ} and integration by parts, the summand of (\ref{boundabs}) is
\begin{align}
   \ll  \   \frac{X}{c} \, \left(\frac{\sqrt{\ell X}}{cD}\right)^{2(a-1)}  \left(\frac{|m|X}{cD}\right)^{-r} \, \int_{(a)} \ \left|\frac{\gamma_{k}(1-v)}{\gamma_{k}(v)}  \right| \, \int_{\R} \ \left|\frac{d^r}{dy^r} \left[G_{s}(y)y^{v-1}\right]\right| \,  dy \  |dv| \nonumber
\end{align}
for $1< a<(k+1)/2$. Let $r\ge 2$,  $v= a +i\tau$ and $s= \sigma+it$ with $\sigma, t, \tau \in \R$. Then 
\begin{align}\label{ratiostir}
\left|\frac{\gamma_{k}(1-v)}{\gamma_{k}(v)}  \right|  \asymp_{k, a}  (1+ |\tau|)^{1-2a} \hspace{5pt}
\text{and} \hspace{5pt}
    \left|\frac{d^r}{dy^r} \left[G_{s}(y)y^{v-1}\right]\right| 
     \ll_{a, r, \sigma}  \left(\left(1 + |t|\right) \left(1 + |\tau|\right)\right)^r. 
\end{align}
Hence, if  $1< a<(k+1)/2$ and $r$ is an integer such that $2\le r< 2(a-1)$, then 
\begin{align}\label{largecest}
   \frac{X}{c} \,    \left| \int_{\R} \, G_{s}(y)  J_{k-1}\left(\frac{4\pi\sqrt{\ell yX}}{cD}\right)  e\left(-\frac{myX}{cD}\right) \, dy\right| 
    \ \ll  \     \left(1+ |t|\right)^r \frac{X^{a-r}}{|m|^r c^{2a-1-r}},
\end{align}
\begin{align}\label{smaest}
    \mathcal{I}_{s, >}(X;\ell)  \ll   \left(1+ |t|\right)^r X^{a-r}\sum_{c\gg \sqrt{X}}  \frac{1}{c^{2a-1-r}}   \ll \left(1+ |t|\right)^r X^{1-r/2},
\end{align}
where the implicit constants may depend on $\ell, D, k, \sigma, r, a$.

Next, consider $ \mathcal{I}_{s,\le}(X;\ell)$. After applying (\ref{asympJbess}),  we estimate the  oscillatory integral  
 \begin{align}
     \int_{\R} \, G_{s}\left(y\right)  W_{k}\left(\frac{2\sqrt{y\ell X}}{cD}\right) e\left(\frac{2\sqrt{y\ell X}-mXy}{cD}\right) \, dy. \nonumber
 \end{align}
To this end, we make use of  Lem. \ref{BKYibp} with the functions
 \begin{align*}
     w_{s}(y) \ := \ G_{s}\left(y\right)  W_{k}\left(\frac{2\sqrt{y\ell X}}{cD}\right)  \hspace{10pt} \text{ and } \hspace{10pt} h(y) \ := \ \frac{2\sqrt{y\ell X}-mXy}{cD}. 
 \end{align*}
Suppose  $y\in [1, 2]$ and $r\ge 2$. We observe the following bounds:
\begin{align*}
   |h'(y)| \gg \ \frac{|m|X}{cD},  \hspace{5pt} 
    |h^{(r)}(y)| \asymp_{r} \frac{\sqrt{\ell X}}{cD}  \ge  20, \hspace{5pt}  |w_{s}^{(r)}(y)| 
      \ll_{r,k, \sigma}  \left(1 + |t|\right)^r \left(\frac{2\sqrt{\ell X}}{cD}\right)^{-1/2},
\end{align*}
  which follows from the product rule and  (\ref{bessder}).  Now, apply Lem. \ref{BKYibp}  with
\begin{align*}
    W  =   \left(\frac{2\sqrt{\ell X}}{cD}\right)^{-1/2}, \hspace{10pt} V  =  \left(1+|t|\right)^{-1}, \hspace{10pt} H  =   \frac{\sqrt{\ell X}}{cD},  \hspace{10pt} G  =   1, \hspace{10pt} R  =  \frac{|m|X}{cD},
\end{align*}
we conclude, for any $c>0$, $m\neq 0$, $X>2\ell$, and $A>1$, that 
\begin{align*}
  \frac{X}{c} \,  \left| \int_{\R} \, w_{s}(y) e\left(h(y)\right) \, dy \right| 
    \ &\ll \ \frac{X^{3/4}}{\sqrt{c}} \,  \, \left(\frac{c\left(1+|t|\right)}{|m|X}\right)^{A},
\end{align*} 
\begin{align}\label{larest}
    \mathcal{I}_{s,\le}(X;\ell) \ \ll \ X^{3/4}  \, \left(\frac{\left(1+|t|\right)}{X}\right)^{A} \ \sum_{c\ll \sqrt{X}}  \ c^{A-1/2} \ \ll \ X \, \left(\frac{\left(1+|t|\right)}{\sqrt{X}}\right)^{A},
\end{align}
where the implicit constants depend on $k,\ell, D, A,\sigma$. The bound (\ref{larbd}) follows from taking $r=A=k-2$ and $a= (k+1)/2-\epsilon$  in (\ref{smaest})--(\ref{larest}). 
 \end{proof}

\begin{proof}[Proof of Prop. \ref{polygrAC}]
Let $\ell \ge 1$. For $\re s\gg  1$, we have
\begin{align}\label{dyaspl}
  \mathcal{A}_{\ell}(s) \ = \   \bigg(\sum_{\substack{2^u> 2\ell \\ u\ge -1} } \ + \ \sum_{\substack{2^u\le  2\ell \\ u\ge -1} }\bigg) \  \frac{\mathcal{I}_{s}(2^u;\ell)}{2^{us}}. 
\end{align}
Apply Lem.  \ref{weakt-asp} to the first sum of (\ref{dyaspl}), it follows that
\begin{align}\label{largell}
    \sum_{\substack{2^u> 2\ell \\ u\ge -1} } \ \left|\frac{\mathcal{I}_{s}(2^u;\ell)}{2^{us}}\right| \ \ll \  \left(1+|t|\right)^{k-2} \sum_{\substack{u\ge 0} } \frac{ (2^u)^{-\frac{k-4}{2}}}{2^{u\sigma}} \ \ll \ \left(1+|t|\right)^{k-2},
\end{align}
for any $\sigma > -(k-4)/2$ and $t\in \R$. On the other hand, observe that
\begin{align}\label{smallell}
\sum_{\substack{2^u\le  2\ell \\ u\ge -1} } \  \left|\frac{\mathcal{I}_{s}(2^u;\ell)}{2^{us}} \right| 
    \ \ll \ \sum_{n\le 4\ell} \ \  \sideset{}{^h}{\sum}_{f \in \mathcal{B}_{k}(D, \chi)} \ \left(|a_{f}(\ell)|^2 + |a_{f}(n)|^2\right) \ \ll \ 1.
\end{align}
The last estimate follows from Lem. \ref{Peterform}, the Weil bound for $S_{\chi}(m, n;c)$, (\ref{asympJbess}) and  (\ref{Jbessunibd}).

From (\ref{largell}) and (\ref{smallell}), both of the series on the right side of (\ref{dyaspl}) converge pointwise absolutely in the region $\re s >-(k-4)/2$, and can in fact be made uniform 
in every vertical strip $\sigma_{1} \le \sigma\le \sigma_{2}$ with $\sigma_{1}>-(k-4)/2$. The results follow. 
\end{proof}


\section{Proof of Theorem \ref{axiomFE}}\label{deduceFE}

\subsection{Mellin inversion}

Recall the set-up of Thm. \ref{axiomFE} and the definition of $\mathcal{B}_{k}^*(D, \chi)$ therein. Since $k\ge 4$, Prop. \ref{polygrAC} implies that $\mathcal{L}(s,f)$ and $\overline{\mathcal{L}}(s,f)$ admit analytic continuation up to $\re s>0$. For $g\in C_{c}^{\infty}(0, \infty)$, 
\begin{align}\label{Gprop}
    \text{ $\mathcal{G}(s):=\int_0^\infty g(x)x^{s-1}dx$ \  is entire and decays rapidly as $|\im s|\to\infty$. }
\end{align}

\begin{lemma}
With the same setting of Thm. \ref{axiomFE} and for any $\sigma\in(0,1)$, $\ell\geq1$, and $g\in C_{c}^{\infty}(0,\infty)$, we have 
\begin{align}\label{mainglobid}
    \int_{(\sigma)}  \mathcal{G}(s)   \bigg\{  \ \ 	\sideset{}{^h}{\sum}_{f \in \mathcal{B}^*_{k}(D, \chi)} \ \overline{a_{f}(\ell)} \, \bigg( \mathcal{L}(s, f)   -     i^{k} \chi(-1) \epsilon_{\chi} \overline{a_{f}(D)}\,  D^{\frac{1}{2}-s}  \frac{\gamma_{k}(1-s)}{\gamma_{k}(s)} \, \overline{\mathcal{L}}\left(1-s, f\right)\bigg) \bigg\}  \frac{ds}{2\pi i}  =   0. 
    \end{align}

\end{lemma}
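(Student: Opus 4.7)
The plan is to identify each of the two Mellin integrals in (\ref{mainglobid}) with one side of the averaged Voronoi formula (Thm.~\ref{FourVorthm}), reduced under Assumption~\ref{mainHecput}. Since the Dirichlet series for $\mathcal{L}(s,f)$ converges absolutely only for $\re s > 1$ and that for $\overline{\mathcal{L}}(1-s,f)$ only for $\re s < 0$, I will treat the two pieces on separate contours, using the rapid decay of $\mathcal{G}(s)$ from (\ref{Gprop}) to kill the horizontal contributions.

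For the first piece, I would shift the contour from $\sigma \in (0,1)$ to $\sigma = 2$; Prop.~\ref{polygrAC} supplies holomorphy and polynomial growth of $\sideset{}{^h}{\sum}_f \overline{a_f(\ell)}\mathcal{L}(s,f)$ in the strip, so no poles are crossed. At $\sigma = 2$ the Dirichlet series for $\mathcal{L}(s,f)$ is absolutely convergent, and swapping sum and integral followed by Mellin inversion yields
\[
\int_{(2)} \mathcal{G}(s)\sideset{}{^h}{\sum}_f \overline{a_f(\ell)}\mathcal{L}(s,f)\,\frac{ds}{2\pi i} \;=\; \sideset{}{^h}{\sum}_f \overline{a_f(\ell)}\sum_{n=1}^{\infty}a_f(n)\,g(n),
\]
which is precisely the left-hand side of Thm.~\ref{FourVorthm}. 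For the second piece, I would shift from $\sigma \in (0,1)$ to $\sigma = -1$. The ratio $\gamma_k(1-s)/\gamma_k(s)$ has poles only at $s = (k+1)/2 + m$ for $m \ge 0$, all lying to the right of $\sigma = 2$ when $k \ge 4$, so none are crossed. Holomorphy and polynomial growth of $\sideset{}{^h}{\sum}_f \overline{a_f(\ell)}\,\overline{a_f(D)}\,\overline{\mathcal{L}}(1-s,f)$ in the relevant strip will follow by applying Prop.~\ref{polygrAC} to the conjugate averages---equivalently, transferring the computation to $S_k(D,\overline{\chi})$ via the involution $f\mapsto f^c=\overline{f(-\bar z)}$ and using the finiteness of $\mathcal{B}_k^*(D,\chi)$. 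At $\sigma = -1$ the Dirichlet series $\overline{\mathcal{L}}(1-s,f)=\sum_n \overline{a_f(n)}\,n^{s-1}$ is absolutely convergent (the Petersson diagonal alone yields $|a_f(n)|\ll_f n^{1/4}$), so I can swap the $n$-sum with the $s$-integral. Applying the Mellin--Barnes integral (\ref{bessgamm}) with $x \mapsto \sqrt{nx'/D}$ and then shifting its $s$-line from $(1,(k+1)/2)$ down to $\sigma = -1$ (again without crossing poles) collapses the inner integral to $2\pi \int_0^\infty g(x)J_{k-1}(4\pi\sqrt{nx/D})\,dx$. The second Mellin integral thus evaluates to
\[
\frac{i^k\chi(-1)\epsilon_\chi}{\sqrt{D}}\,\sideset{}{^h}{\sum}_f \overline{a_f(\ell)}\,\overline{a_f(D)}\sum_{n=1}^{\infty}\overline{a_f(n)}\cdot 2\pi\int_0^\infty g(x)J_{k-1}\!\Big(4\pi\sqrt{\tfrac{nx}{D}}\Big)dx.
\]

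Finally, I would invoke Assumption~\ref{mainHecput} to rewrite the right-hand side of Thm.~\ref{FourVorthm}: part~(1) converts $a_f(\ell')\overline{\chi}(\ell')$ into $\overline{a_f(\ell')}$ via $(\ell',D)=1$, while part~(2) factors $\overline{a_f(\ell_0 Dn)}=\overline{a_f(\ell_0)}\,\overline{a_f(D)}\,\overline{a_f(n)}$ and assembles $\overline{a_f(\ell_0)}\,\overline{a_f(\ell')}=\overline{a_f(\ell)}$. The two Mellin integrals then coincide by Thm.~\ref{FourVorthm}, and (\ref{mainglobid}) follows. The main obstacle will be the left-half-plane bookkeeping for the second piece: extending the polynomial growth estimate (\ref{preast}) to the conjugate average (which Prop.~\ref{polygrAC} states only for $\mathcal{A}_\ell(s)$ itself) and handling the Stirling blow-up $|\gamma_k(1-s)/\gamma_k(s)|\asymp(1+|t|)^{1-2\sigma}$ on the line $\sigma=-1$, both across the strip (for the contour shift) and pointwise in $t$ (for the sum--integral swap). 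Once the Schwartz-class decay of $\mathcal{G}(s)$ from (\ref{Gprop}) is deployed against these, the estimates close cleanly.
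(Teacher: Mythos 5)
Your proof is essentially the paper's argument, reorganized. The paper works forward from $I_k(\ell;D,\chi)$ (eq.~(\ref{truncIsum})): Mellin inversion gives $\int_{(3/2)}\mathcal{G}(s)\mathcal{A}_\ell(s)\,\frac{ds}{2\pi i}$, which is then shifted into $(0,1)$; separately, Thm.~\ref{FourVorthm} combined with (\ref{Hankmel}), a shift to $\re s=-1/2$, and Assumption~\ref{mainHecput} gives the second integral, which is shifted back to $(0,1)$. You instead take the two pieces of (\ref{mainglobid}), push them out to $\sigma=2$ and $\sigma=-1$ respectively where the Dirichlet series converge absolutely, and then invoke Thm.~\ref{FourVorthm} plus Assumption~\ref{mainHecput} to match them. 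This is logically the same chain run in the opposite direction, using the same three ingredients (Prop.~\ref{polygrAC}, the rapid decay in (\ref{Gprop}), and Thm.~\ref{FourVorthm}/Assumption~\ref{mainHecput}); the coefficient bookkeeping you do with Assumption~\ref{mainHecput} is correct. The one place you deviate in substance is the growth estimate for $\sideset{}{^h}{\sum}_f\overline{a_f(\ell)}\,\overline{a_f(D)}\,\overline{\mathcal{L}}(1-s,f)$, which you propose to obtain via the involution $f\mapsto f^c$ into $S_k(D,\overline\chi)$. That can be made to work, but the paper's route is more direct: the linear-algebra argument at the start of Sect.~\ref{isosect} (which relies only on Prop.~\ref{polygrAC}, not on (\ref{mainglobid}), so there is no circularity) already continues each individual $\mathcal{L}(s,f_i)$ with polynomial growth, and since $\overline{\mathcal{L}}(s,f_i)=\overline{\mathcal{L}(\bar s,f_i)}$ and $\mathcal{B}_k^*(D,\chi)$ is finite, the required bound for the conjugate average is immediate without invoking a second character.
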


\begin{proof}
Apply Mellin inversion, rearrange sums and integrals in (\ref{truncIsum}),  we have
\begin{align}
I_k(\ell; D, \chi)  
\ &= \   \, \int_{(3/2)} \, \mathcal{G}(s) \sideset{}{^h}{\sum}_{f \in \mathcal{B}^*_{k}(D, \chi)} \, \overline{a_{f}(\ell)}  \mathcal{L}(s,f) \ \frac{ds}{2\pi i}. \nonumber
\end{align} 
By Cor. \ref{polygrAC} and (\ref{Gprop}), we may shift the contour to $\re s =\sigma \in (0,1)$.  Recall Thm. \ref{FourVorthm}: 
\begin{align}
    I_k(\ell; D, \chi)  
	\ &= \ \frac{i^{k}\chi(-1) \overline{\chi}(\ell')\epsilon_{\chi}}{\sqrt{D}}  \ \  \sideset{}{^h}{\sum}_{f \in \mathcal{B}_{k}^*(D, \chi)} \, \sum_{n\ge 1}  \, (\mathcal{H}_{k}g)\left(\frac{n}{D}\right) \overline{a_{f}(\ell_{0}Dn)} a_{f}\left(\ell'\right). \nonumber
\end{align}
From (\ref{Hatrasdef}) and (\ref{bessgamm}), we have
\begin{align}\label{Hankmel}
	(\mathcal{H}_{k}g)\left(\frac{n}{D}\right)  \ = \   \frac{D}{n} \int_{(3/2)} \, \mathcal{G}(s) \frac{\gamma_{k}(1-s)}{\gamma_{k}(s)} \left(\frac{n}{D}\right)^{s}  \ \frac{ds}{2\pi i}. 
\end{align}
Using (\ref{Gprop}) and the estimate (\ref{ratiostir}), we shift the contour to $\re s = -1/2$ in (\ref{Hankmel}):
\begin{align}
	I_k(\ell; D, \chi)  
	 = i^k \, \chi(-1) \epsilon_{\chi} \ \ \sideset{}{^h}{\sum}_{f \in \mathcal{B}^*_{k}(D, \chi)}  \ a_{f}\left(\ell'\right)\overline{\chi}(\ell')   \int_{(-1/2)} \,   D^{\frac{1}{2}-s} \, \mathcal{G}(s) &\, \frac{\gamma_{k}(1-s)}{\gamma_{k}(s)}  \,   \sum_{n\ge 1} \, \frac{\overline{a_{f}}(\ell_{0}Dn)}{n^{1-s}} \,
  \frac{ds}{2\pi i}. \nonumber
\end{align}

We appeal to Assumption \ref{mainHecput} for $ \mathcal{B}_{k}^*(D,\chi)$. More precisely, 
\begin{align}
a_{f}\left(\ell'\right)\overline{\chi}\left(\ell'\right) \, = \, \overline{a_{f}}\left(\ell'\right), \hspace{5pt}
\overline{a_{f}}(\ell_{0}Dn) \, =  \, \overline{a_{f}}(\ell_{0})\overline{a_{f}}(D)\overline{a_{f}}(n), \hspace{5pt}
\overline{a_{f}}\left(\ell'\right)\overline{a_{f}}(\ell_{0}) \, = \, \overline{a_{f}}(\ell)  
\end{align}
for $n>0$ and $\ell= \ell_{0}\ell'$ with $\ell_{0}\mid D^{\infty}$ and $(\ell', D)=1$. 
It follows that
\begin{align}
	I_k(\ell; D, \chi)  
	\ = \ \sideset{}{^h}{\sum}_{f \in \mathcal{B}^*_{k}(D, \chi)}  \ \overline{a_{f}(\ell)}  \, \cdot \,  i^k \, \chi(-1) \epsilon_{\chi}  \overline{a_{f}(D)} \int_{(-1/2)} \,   D^{\frac{1}{2}-s} \, \mathcal{G}(s) &\, \frac{\gamma_{k}(1-s)}{\gamma_{k}(s)}  \,  \overline{\mathcal{L}}(1-s,f) \
  \frac{ds}{2\pi i}. \nonumber
\end{align}
Rearranging the sums and integrals above as
\begin{align}
    I_k(\ell; D, \chi)  
	 =     i^k \, \chi(-1) \epsilon_{\chi}  \overline{a_{f}(D)} \int_{(-1/2)} \,   D^{\frac{1}{2}-s} \, \mathcal{G}(s) &\, \frac{\gamma_{k}(1-s)}{\gamma_{k}(s)}  \, \sideset{}{^h}{\sum}_{f \in \mathcal{B}^*_{k}(D, \chi)}  \ \overline{a_{f}(\ell)} \ \overline{\mathcal{L}}(1-s,f) \
  \frac{ds}{2\pi i}. \nonumber
\end{align}
From (\ref{ratiostir}), (\ref{Gprop}), Cor. \ref{polygrAC}, the contour can be shifted to $\re s= \sigma\in (0,1)$. (\ref{mainglobid}) follows. 
\end{proof}


\subsection{Isolating a single form}\label{isosect}

Fix an orthogonal basis $\mathcal{B}^*_{k}(D, \chi):=\left\{ f_{1}, \ldots, f_{d}\right\}$ as in Assumption \ref{mainHecput}.  The vectors $ \underline{v_{i}} \ := \ \left( \overline{a_{f_{i}}}(1), \, \overline{a_{f_{i}}}(2), \, \overline{a_{f_{i}}}(3), \, \ldots \,  \right)$ for $i=1,\ldots, d$ are $\C$-linearly independent. Indeed, suppose $f:= \sum_{i=1}^{d} \alpha_{i} f_{i} \in S_{k}(D, \chi)$ and $\sum_{i=1}^{d} \overline{\alpha_{i}}\,  \underline{v_{i}}=0$ for some $\alpha_{i}\in \C$.  Then $\left(a_{f}(1), a_{f}(2), a_{f}(3), \, \ldots \,  \right)=0$. It follows from (\ref{conven}) that $f=0$. Hence, $\alpha_{1}=\cdots =\alpha_{d}=0$.  Then there exists $\ell_{1}< \cdots < \ell_{d}$ such that $A:= (\overline{a_{f_{i}}}(\ell_{j}))_{\substack{1\le i, j\le d}}$ is invertible.

By Cor. \ref{polygrAC},  the function $s  \mapsto  \sideset{}{^h}{\sum}_{f \in \mathcal{B}_{k}(D, \chi)} \ \overline{a_{f}(\ell_{j})} \mathcal{L}(s,f)$
admits a holomorphic continuation, say $G_{\ell_{j}}(s)$,  to the region  $\re s>-(k-4)/2$ for each $j=1,\ldots, d$. We have
\begin{align}\label{ACvecto}
    \left( \frac{\mathcal{L}(s,f_{1})}{||f_{1}||^2}, \ \ldots, \ \frac{\mathcal{L}(s,f_{d})}{||f_{d}||^2} \right)  \ =  \  \frac{(4\pi)^{k-1}}{\Gamma(k-1)}\left(G_{\ell_{1}}(s), \, \ldots, \, G_{\ell_{d}}(s)\right)A^{-1}.
\end{align}
From the right-hand side of (\ref{ACvecto}), we obtain an analytic continuation for $\mathcal{L}(s, f_{i})$  to $\re s>-(k-4)/2$ for each $i=1,\ldots, d$. The same argument works for $\overline{\mathcal{L}}(s, f_{i})$'s as well.

 Since the spectral identity (\ref{mainglobid}) holds for an arbitrary choice of $g\in C^{\infty}_{c}(0, \infty)$ and $\sigma\in (0,1)$, elementary analysis allows us to infer that
\begin{align*}
     \left( \frac{\mathcal{L}(s, f_{i})}{||f_{i}||^2}   -    i^{k} \chi(-1) \epsilon_{\chi} \overline{a_{f}(D)}\,  D^{\frac{1}{2}-s}  \frac{\gamma_{k}(1-s)}{\gamma_{k}(s)} \, \frac{\overline{\mathcal{L}}\left(1-s, f_{i}\right)}{||f_{i}||^2}\right)_{1\le i\le d} \, \cdot \,  A \ = \  (0,\ldots, 0) 
\end{align*}
for any $\ell \ge 1$ and on $0<\re s <1$. 
Since $A$ is invertible, (\ref{primnebFE}) for $f=f_{i}$ ($i= 1, \ldots, d$) follow.

\subsection{End Game}\label{fullcont}
In Sect. \ref{isosect}, we showed that $ \mathcal{L}(s, f_{i})$ admits a holomorphic continuation to the region $\re s> -(k-4)/2$. Also, we have
\begin{align}\label{moveFE}
  \mathcal{L}(s, f_{i})  \ =   \  i^{k} \chi(-1) \epsilon_{\chi} \overline{a_{f}(D)}\,  D^{\frac{1}{2}-s}  \frac{\gamma_{k}(1-s)}{\gamma_{k}(s)} \, \overline{\mathcal{L}}\left(1-s, f_{i}\right) 
\end{align}
upon restricting ourselves to the region $0< \re s< 1$. The right-hand side of (\ref{moveFE}) has been proven to be holomorphic on $\re s< k/2-1$ as well. As a result, we obtain an entire continuation for $\mathcal{L}(s, f_{i})$ (and similarly for $\overline{\mathcal{L}}(s,f_{i})$). Then (\ref{moveFE}) holds for all $s\in \C$ by analytic continuation. This completes the proof of Thm. \ref{axiomFE}.


\section{The case for trivial nebentypus (Theorem \ref{trivthm})}\label{trivsect}

\subsection{When \texorpdfstring{$D>1$}{D>1} is square-free}\label{sqfree} 
The calculations performed in Sect. \ref{mainbody} are valid through Sect. \ref{secPois}, but slight modifications are needed in Sect. \ref{arithsum}. From (\ref{secoPoi}), we have
\begin{align*}
  I_k(\ell; D)  &=   2\pi  \,  \sum_{n>0} \, (\mathcal{H}_{k}g)\left(\frac{n}{D}\right)\, 
 \sum_{\substack{c> 0\\ (c,D)=1 }}    \frac{S(n, \ell\overline{D}; c)}{cD}  J_{k-1}\left( \frac{4\pi \sqrt{\ell nD}}{cD}\right). 
\end{align*}
When $(c,D)=1$, we have
\begin{align}\label{trivtwmu}
    S(\ell_{0}Dn, \ell'; cD)  \ = \ \sideset{}{^*}{\sum}_{\beta\, (c)} \ e\left( \frac{\ell_{0}n\, \overline{\beta}+\ell'\, \overline{D}\beta}{c}\right) \  \sideset{}{^*}{\sum}_{\alpha \,(D)}\,  e\left(\frac{\ell' \, \overline{c}\alpha}{D}\right) \ = \   S(n, \ell\overline{D}; c) \  \sideset{}{^*}{\sum}_{\alpha \,(D)}\,  e\left(\frac{\alpha}{D}\right).  
\end{align}
 The last exponential sum is the \emph{Ramanujan sum} and can be evaluated as $\mu(D)$, see \cite[eq. (3.4)]{IK}. Since $D$ is square-free, we have $ S(n, \ell\overline{D}; c) \, = \,  \mu(D)S(\ell_{0}Dn, \ell'; cD)$
and
\begin{align*}
  I_k(\ell; D)    \ &= \  2\pi \mu(D) \,  \sum_{n>0} \, (\mathcal{H}_{k}g)\left(\frac{n}{D}\right)\, 
 \sum_{\substack{c> 0\\ (c,D)=1 }} \,   \frac{S(\ell_{0}Dn, \ell'; cD)}{cD}  J_{k-1}\left( \frac{4\pi \sqrt{\ell nD}}{cD}\right). 
\end{align*}
The condition $(c,D)=1$ in the $c$-sum can be dropped by Lem. \ref{basictwisKL}. Therefore, 
\begin{align*}
  I_k(\ell; D)    
 \ &= \  \mu(D)i^{k} \ \sideset{}{^h}{\sum}_{f \in \mathcal{B}_{k}(D)} \, a_{f}\left(\ell'\right)\, \sum_{n>0}  \,  \overline{a_{f}(\ell_{0}Dn)}(\mathcal{H}_{k}g)\left(\frac{n}{D}\right). 
\end{align*}
This proves Thm. \ref{trivthm}.(\ref{avgFourVortriv}). Following the argument of Sect. \ref{deduceFE}, we deduce Thm. \ref{trivthm}.(\ref{triFEpt}).

\subsection{When \texorpdfstring{$D=1$}{D=1}}
(\ref{resulPetPo}) must be modified. Indeed, when $m=0$,  we have  $x \equiv 0 \  (c)$ and $(x,c)=1$ in (\ref{evagammsum}),  giving $c=1$. The dual zeroth frequency is given by $i^{-k}(\mathcal{H}_{k}g)(\ell)$. Thus, 
\begin{align*}
		I_k(\ell)  = g(\ell)  +  i^{-k}(\mathcal{H}_{k}g)(\ell) 
   +  2\pi i^{-k}  \sum_{c=1}^{\infty} \frac{1}{c}  \sum_{\substack{m\neq 0 \\ (m, c)=1}}      e\left(-\frac{\ell \overline{m}}{c}\right)   \int_{\R} g(y)  J_{k-1}\left(\frac{4\pi\sqrt{\ell y}}{c}\right)  e\left(-\frac{my}{c}\right)  dy.
\end{align*}
Following the calculations done in Sect. \ref{sect:step2}--\ref{poisprep}, eq. (\ref{compforPois}) becomes
\begin{align*}
    I_k(\ell)  =  i^{-k}(\mathcal{H}_{k}g)(\ell)  +  
    2\pi   \sum_{c\in \mathbb{Z}} \, \sum_{\substack{m> 0 \\ (m, c)=1}}    e\left(\frac{\ell \, \overline{c}}{m}\right)   \int_{0}^{\infty} (\mathcal{H}_{k}g)\left( mx\right)    J_{k-1}\left( 4\pi\sqrt{\frac{\ell x}{m}}\right) e\left(cx\right) dx.
\end{align*}
 Applying Poisson summation as in Sect. \ref{secPois}, we obtain
\begin{align*}
  I_k(\ell)  &=   i^{-k}(\mathcal{H}_{k}g)(\ell)  +  i^k \sum_{n>0}  (\mathcal{H}_{k}g)\left(n\right) \cdot  2\pi i^{-k} 
 \sum_{\substack{c> 0 }}    \frac{S(n, \ell; c)}{c}      J_{k-1}\left( \frac{4\pi \sqrt{\ell n}}{c}\right). 
\end{align*}
The discussion of Sect. \ref{arithsum} is not necessary in the present case, except notice that $S(n, \ell; c)= S( \ell,n; c)$. As in Sect. \ref{Petinrev}, we apply the Petersson formula in reverse:
\begin{align*}
  I_k(\ell)     &=   i^{-k}(\mathcal{H}_{k}g)(\ell)  +  i^k \sum_{n>0} \, (\mathcal{H}_{k}g)\left(n\right) \bigg\{  \ \  \sideset{}{^h}{\sum}_{f \in \mathcal{B}_{k}(1)} \,  \overline{a_{f}(\ell)} a_{f}\left(n\right) -   \delta(n=\ell)\bigg\}. 
\end{align*}
Since $2\mid k$,   the dual zeroth frequency from the first application of Poisson summation cancels out the diagonal term from the second application of the Petersson formula. So,
\begin{align*}
  I_k(\ell)   =    i^k \sideset{}{^h}{\sum}_{f \in \mathcal{B}_{k}(1)} \,  \overline{a_{f}(\ell)} \ \sum_{n>0} \, a_{f}\left(n\right)(\mathcal{H}_{k}g)\left(n\right). 
\end{align*}
Then Thm. \ref{trivthm}.(\ref{ptD1ACFE}) follows readily from the discussions of Sect. \ref{deduceFE}. Thm. \ref{trivthm} follows.



\printbibliography

\end{document}